\newcommand{\br}{\mathbf R}
\newcommand{\Cal}{\mathcal}
\newcommand{\gs}{\gtrsim}
\newcommand{\hess}{\operatorname{Hess}}
\renewcommand{\iff}{\Leftrightarrow}
\newcommand{\im}{\operatorname{Im}}
\newcommand{\ls}{\lesssim}
\newcommand{\mn}[1]{\Vert#1\Vert}
\newcommand{\ol}{\overline}
\newcommand{\re}{\operatorname{Re}}
\newcommand{\restr}[1]{\big|_{#1}}
\newcommand{\set}[1]{\left\{\,#1\,\right\}}
\newcommand{\st}{\Sigma _2}
\newcommand{\supp}{\operatorname{\rm supp}}
\newcommand{\w}[1]{\langle #1\rangle }
\newcommand{\wf}{\operatorname{WF}}
\newcommand{\wt}{\widetilde}
\newcommand{\se}[1]{ S^#1_{1-{\varepsilon},{\varepsilon}}}
\numberwithin{equation}{section}
\begin{document}



\baselineskip 18pt 
\lineskip 2pt
\lineskiplimit 2pt

\title[Limit bicharacteristics]{Solvability and limit bicharacteristics}
\author[NILS DENCKER]{{\textsc Nils Dencker}}
\address{Centre for Mathematical Sciences, University of Lund, Box 118,
SE-221 00 Lund, Sweden}
\email{dencker@maths.lth.se}

\date{26 juni 2016} 

\subjclass[2010]{35S05 (primary) 35A01, 58J40, 47G30 (secondary)}

\maketitle

\thispagestyle{empty}

\section{Introduction}

We shall consider the solvability for a classical pseudodifferential
operator $P$ on a $C^\infty$ manifold $X$.  This means that $P$ has an
expansion $p_m + p_{m-1} + \dots$ where $p_k \in S^{k}_{hom}$ is
homogeneous of degree $k$ and $p_m = {\sigma}(P)$
is the principal symbol of the operator. The operator $P$ 
is solvable at a compact set $K \subseteq X$ if the equation  
\begin{equation}\label{locsolv}
Pu = v 
\end{equation}
has a local solution $u \in \Cal D'(X)$ in a neighborhood of $K$
for any $v\in C^\infty(X)$
in a set of finite codimension.  
We can also define the microlocal solvability at any compactly based cone
$K \subset T^*X$, see ~Definition~\ref{microsolv}. 

A pseudodifferential operator is of
principal type if the Hamilton vector field 
\begin{equation}
H_p = \sum_{j=1}^{n}\partial_{{\xi}_j} p\partial_{x_j} -
\partial_{x_j} p\partial_{\xi_j} 
\end{equation}
of the principal symbol $p$ does not have the radial
direction $\w{{\xi}, \partial_{\xi}}$ at $p^{-1}(0)$, in particular
$H_p \ne 0$ then. By homogeneity $H_p$ is well defined 
on the cosphere bundle $S^*X = \set{(x,{\xi}) \in T^*X:\ |{\xi}| =
  1}$, defined by some choice of Riemannean metric.
For pseudodifferential operators of principal type, it is known
\cite{de:nt} \cite{ho:nec} that  
local solvability at a point is equivalent to condition (${\Psi}$)
which means that
\begin{multline}\label{psicond} \text{$\im (ap)$
    does not change sign from $-$ to $+$}\\ 
 \text{along the oriented
    bicharacteristics of $\re (ap)$}
\end{multline}
for any $0 \ne a \in C^\infty(T^*M)$. Oriented bicharacteristics are
the positive flow-outs of the Hamilton vector field $H_{\re (ap)} \ne
0$ on $\re (ap) =0$. Bicharacteristics of $\re ap$ are also called
semi-bicharacteristics of $p$.

We shall consider the case when the principal symbol is real and
vanishes of at least second order at an involutive manifold~$\st$,
thus~ $P$ is not of principal type. For operators which
are not of principal type, the values of the subprincipal symbol
$p_{m-1}$ at ~$\st$ becomes important.  
In the case when principal symbol $p = {\xi}_1{\xi}_2$, Mendoza and
Uhlman~\cite{MU1} proved that $P$ was not 
solvable if the subprincipal symbol changed sign on the $x_1$ or
$x_2$ lines when ${\xi}_1 = {\xi}_2 = 0$. Mendoza~\cite{Men}
generalized this to the case when the principal symbol vanishes on an
involutive submanifold  having an indefinite Hessian with rank equal to the
codimension of the manifold. The Hessian then gives well-defined limit
bicharacteristis over the submanifold, and $P$ is
not solvable if the subprincipal symbol changes sign on any of these
limit bicharacteristics. This corresponds to condition ($P$) on the
limit bicharacteristics (no sign changes) because in this case one gets
both directions when taking the limits.

In this paper, we shall extend this result to more general
pseudodifferential operators. As in the previous cases, the operator
will have real principal symbol and we shall consider the limits of
bicharacteristics at the set where the principal symbol vanishes of at
least second order. The convergence shall be as smooth
curves, then the limit bicharacteristic also is a smooth curve. We shall
also need uniform bounds on the curvature of the characteristics at
the bicharacteristics, but only along the tangent space of a Lagrangean
 submanifold of the characteristics, which we call a grazing Lagrangean space,
see~\eqref{cond0}. This gives uniform bounds on the linearization of
the normalized 
Hamilton flow on the tangent space of this submanifold at the
bicharacteristics. Our main result is Theorem~\ref{mainthm}, which
essentially says that under these conditions the operator is not
solvable at the limit 
bicharacteristic if the quotient of the imaginary part of 
the subprincipal symbol with the norm of the Hamilton vector field
switches sign from $-$ to $+$ on the
bicharacteristics and becomes unbounded as they converge to the limit
bicharacteristic.

\section{Statement of results}

Let the principal symbol $p$ be real valued, ${\Sigma} =
p^{-1}(0)$ be the characteristics, and ${\Sigma}_2$ be the set of
double characteristics, i.e., the points on ${\Sigma}$ where $dp =
0$. Let $\set{{\Gamma}_j}_{j=1}^\infty$ be a set of bicharacteristics
of $p$ on $S^*X$ so that ${\Gamma}_j \subset {\Sigma}\setminus {\Sigma}_2$
are uniformly bounded in $C^\infty$ when para\-metrized on a uniformly
bounded interval (for example  with respect
to the arc length). These bounds are defined with respect to some choice of 
Riemannean metric on $T^*X$, but different choices of metric will only
change the bounds with fixed constants. In particular, we have a
uniform bound on the arc lengths:
\begin{equation}\label{hpcond0}
|{\Gamma}_j| \le C \qquad \forall\,j
\end{equation}
We also have that ${\Gamma}_j = \set{{\gamma}_j(t):\ t \in I_j}$ with
$|{\gamma}_j'(t)| \equiv 1$ and $|I_j| \le C$, then $|{\gamma}_j^{(k)}(t)|
\le C_k$ for $t \in I_j$ and all $j$, $k \ge 1$. Let  $\wt p =
p/|\nabla p|$ then the normalized Hamilton vector field is equal to
\begin{equation*}
 H_{\wt p} = |H_p|^{-1}H_p \qquad \text{on $p^{-1}(0)\setminus \st$}
\end{equation*}
then ${\Gamma}_j$ is uniformly bounded in $C^\infty$ if
\begin{equation}\label{hpcond}
| H_{\wt p}^k \nabla \wt p| \le C_k \qquad \text{on
  ${\Gamma}_j$ }\quad \forall\, j, k 
\end{equation}
where $\nabla \wt p$ is the gradient of~$\wt p$. 
Thus the normalized
Hamilton vector field $H_{\wt p}$ is
uniformly bounded in $C^\infty$ as a vector field over~ ${\Gamma}$.
Observe that the bicharacteristics
have a natural orientation given by the Hamilton vector field.  
Now the set of bicharacteristic curves
$\set{{\Gamma}_j}_{j=1}^\infty$ is uniformly bounded in $C^\infty$ when
para\-metrized with respect to the arc length, and therefore it is a
precompact set. Thus there exists a subsequence ${\Gamma}_{j_k}$, $k
\to \infty$, that converge to a smooth 
curve ${\Gamma}$ (possibly a point), called a limit bicharacteristic by
the following definition.

\begin{defn}
We say that a sequence of smooth curves ${\Gamma}_j$ on a smooth
manifold converges to a
smooth limit curve ${\Gamma}$ (possibly a point) if there exist
parametrizations on uniformly bounded intervals that converge in $C^\infty$. 
If $p \in C^\infty(T^*X)$, then a smooth curve ${\Gamma} \subset \st\
\bigcap S^*X$ is  
a {\em limit bicharacteristic} of $p$ if there exists
bicharacteristics ${\Gamma}_j$ that converge to it. 
\end{defn}

Naturally, this definition is invariant, and the set
$\set{{\Gamma}_j}_{j=1}^\infty$ may have 
subsequences converging to several 
different limit bicharacteristics, which could be points.
In fact, if ${\Gamma}_j$ is parametrized with respect to the arc
length on intervals $I_j$ such 
that $|I_j| \to 0$, then we find that ${\Gamma}_j$ converges
to a limit curve which is a point.
Observe that if ${\Gamma}_j$ converge to a limit bicharacteristic
${\Gamma}$, then~\eqref{hpcond0} and~\eqref{hpcond} hold for~${\Gamma}_j$.

\begin{exe}\label{circlex}
Let ${\Gamma}_j$ be the curve parametrized by
\begin{equation*}
[0, 1]\ni  t \mapsto {\gamma}_j(t) = (\cos(jt), \sin(jt))/j
\end{equation*}
Since $|{\gamma}'_j(t)| = 1$ the curves are parametrized with
respect to arc length, and we have that ${\Gamma}_j \to (0,0)$ in
$C^0$, but not in $C^\infty$ since $|{\gamma}_j''(t)| = j$. If we 
parametrize ${\Gamma}_j$ with $x = jt \in [0, j]$ we find that
${\Gamma}_j$ converge to $(0,0)$ in $C^\infty$ but not on uniformly
bounded intervals.  
\end{exe}

\begin{exe}\label{rootex}
Let $P$ have real principal symbol $p = w_1^k - a(w')$ in the coordinates
$(w_1,w') \in T^*\br^n\setminus 0$, where $w_1 = a(w') = 0$ at ${\Sigma}_2$ and
$k \ge 2$. This case includes the cases where the operator is
microhyperbolic, then $k = 2$ and $a(w')$ vanishes of exactly second
order at ${\Sigma}_2$. If $k$ is even then in order to have
limit bicharacteristics, it is necessary that $a \not \le 0$ in a
neighborhood of ${\Sigma}_2$, and then ${\Sigma}$ is given locally by
$w_1 = \pm {a(w')}^{1/k}$. We find that~\eqref{hpcond} is
satisfied if $H_{\wt p}^j w_1 = 0$ and $H_{\wt p}^j
\nabla a = \Cal O(|\nabla a|)$ for any $j$ when $w_1 = 0$.
\end{exe}

But we shall also need a condition on the differential of the Hamilton
vector field $H_p$ at the bicharacteristics along a Lagrangean space,
which will give bounds on the 
curvature of the characteristics in these directions. In the
following, a section of Lagrangean spaces $L$
over a bicharacteristic ${\Gamma}$ will be a map 
$$
{\Gamma} \ni w \mapsto L(w) \subset T_w(T^*X)
$$ 
such that $L(w)$ is a Lagrangean subspace in $T_w{\Sigma}$,
$\forall\,w \in \Gamma$, where ${\Sigma} = p^{-1}(0)$. 
If the section $L$ is $C^1$
then it has tangent space $TL \subset T_{L}(T_{\Gamma}(T^*X))$. 
Since the Hamilton vector field $H_{ p}(w)$ is in $T_w\Gamma \subset 
T_w(T^*(X))$, we find the linearization  (or first order jet)
of $H_p$ at $w \in \Gamma$ is in $T_{H_p(w)}(T_{w}(T^*X))$.

\begin{defn}\label{lagrangedef}
For a bicharacteristic ${\Gamma}$ of\/ $p$ we say that a $ C^1 $ section of
Lagrangean spaces $L$ over \/${\Gamma}$ is a section of {\em grazing
Lagrangean spaces\/} of ${\Gamma}$ if $L \subset T{\Sigma}$ and the
linearization (or first order jet) of $H_{p}$ is in $TL$ at $ \Gamma $.
\end{defn}

Observe that since $L(w) \subset T_w{\Sigma}$ is Lagrangean
we find $ dp(w)\restr {L(w)} = 0 $ and
$H_p(w) \in L(w)$ when $w \in {\Gamma}$. The linearization
of $H_p(w)$ is given by the second order Taylor expansion of $p$ at
$w$ and since $L(w)$ is Lagrangean we find that terms in that
expansion that vanish on $L(w)$
have Hamilton field parallel to~$L$. Thus, the condition that the
linearization of $H_p(w)$ is in $ TL(w)$ only depends on the
restriction to $L(w)$ of the second order Taylor expansion of  $p$ at
$w$. Since $L(w) \subset T_w{\Sigma}$, we find that Definition~\ref{lagrangedef} is invariant
under multiplication of $p$ by non-vanishing factors because $p(w)  = dp(w)\restr{L(w)} = 0$.
Thus we can replace $H_p(w)$ by the normalized Hamilton field $H_{\wt p}$ in the definition.

\begin{exe}
Let $L$ be given by ${\Gamma} \ni w \mapsto (w,L(w))$ with coordinates
chosen so that $L(w)= \set{(x,{\xi}) \in \br^{2n}:\ {\xi} = A(w)x}$
where $A(w): \br^n \mapsto \br^n$ is linear. Then we can
parametrize the tangent space of $L(w_0)$ with
\begin{equation*}
 L'(w_0):\ T_{w_0}{\Gamma}\times L(w_0)  \ni ({\delta}w, {\delta}z) \mapsto (w_0,L(w_0),
 {\delta}w, ({\delta}x, A'(w_0)({\delta}w){\delta}x) + {\delta}z)
\end{equation*}
and $L$ is $C^1$ if $A \in
C^1({\Gamma}, \br^n)$.
In these coordinates, the linearization of
$H_p(w_0)$ is given by
\begin{equation*}
 H_p'(w_0):\  T_{H_p(w_0)}(T_{w_0}(T^*X)) \ni ({\delta}x,
 {\delta}{\xi}, {\delta}z, {\delta}{\zeta}) \mapsto
 H_p(w_0)(\delta z, {\delta}{\zeta}) +
 \partial_{x,{\xi}}H_p(w_0)({\delta}x,{\delta}{\xi}) 
\end{equation*}
where $H_p(w_0)(\delta z, {\delta}{\zeta}) = \partial_{\xi}
p(w_0){\delta}z - \partial_{x}p(w_0){\delta}{\zeta} \in L(w_0)$.
\end{exe}

By Definition~\ref{lagrangedef} we find that the linearization of $H_{p}$ 
gives an evolution equation for the
section $L$, see Example~\ref{grazex}.
Choosing a Lagrangean subspace of $T_{w_0}{\Sigma}$
at $w_0 \in {\Gamma}$ then determines $L$ along ${\Gamma}$, so~$L$
must be smooth.
Actually, $L$ is the tangent space at ${\Gamma}$ of a Lagrangean
submanifold of ${\Sigma}$, see~\eqref{newtau}.

\begin{exe}\label{grazex}
Let $p = {\tau} - \left(\w{A(t)x,x} + 2 \w{B(t)x,{\xi}} +
\w{C(t){\xi},{\xi}}\right)/2$ where $(x,{\xi}) \in T^*\br^n$,
$A(t)$, $B(t)$ and $C(t)\in C^\infty$ are real
$n\times n$ matrices, where $A(t)$ and $C(t)$ are symmetric,
and let ${\Gamma} = \set{(t,0,0,{\xi}_0):\ t \in
  I}$. Then $ H_p = \partial_t $ at $ \Gamma $,
$$p^{-1}(0) = \set{{\tau} = \w{A(t)x,x}/2 + \w{B(t)x,{\xi}}
  + \w{C(t){\xi},{\xi}}/2}$$ 
and the linearization of the Hamilton field $H_p$ at $(t,0,0,{\xi}_0)$ is 
\begin{equation}\label{linham}
\partial_t +  \w{A(t)y + B^*(t){\eta}, \partial_{\eta}} - \w{B(t)y +
        C(t){\eta}, \partial_y} \qquad (y,\eta) \in T^*\br^n
\end{equation}
The linearization of the normalized Hamilton field $ H_{\wt p} $ is the same as~\eqref{linham}, 
since $ |H_p| \cong 1 $ modulo quadratic terms in $ (y,\eta) $.
Since $dp = d{\tau}$ at ${\Gamma}$, a $ C^1 $ section of  Lagrangean spaces in~$ T_{\Gamma}\Sigma $ is for example given by 
$$
L(t) = \set{(s,y,0,E(t)y): (s,y) \in \br^{n}}
$$ 
where $ E(t) \in C^1 $ is symmetric with $E(0) = 0$, and by choosing linear symplectic coordinates $ (y,\eta) $ we 
can obtain any such section on this form.
By applying~\eqref{linham} on $ \eta - E(t)y $, which vanishes on~$ L(t) $, we obtain that
$ L(t) $ is a grazing Lagrangean space if
\begin{equation}\label{eveqE}
E'(t) = A(t) + 2 \re B(t)E(t) + E(t)C(t)E(t)
\end{equation}
with $ \re F =(F + F^*)/2 $, see~\eqref{eveq}. Then by uniqueness
$L(t)$ is constant in $t$ if and only if $A(t) 
\equiv 0$, observe that then $A(t) =  \hess p \restr {L(t)}$.
In general, $ \hess p \restr {L(t)} $ is given by the right hand side of~\eqref{eveqE}.
\end{exe}

Observe that we may choose symplectic coordinates $(t,x;{\tau},{\xi})$
so that ${\tau}= p$ and 
the fiber of $L(w)$ is equal to $ \set{(s,y,0, 0): (s,y) \in \br^{n}}$
at $w \in {\Gamma} 
= \set{(t,0;0,{\xi}_0): \ t \in I}$. But it is not clear that we
can do that {\em uniformly} for a family of bicharacteristics $ \set{\Gamma_j}$, for that we need an additional condition.
Now we assume that there exists a grazing Lagrangean space $L_j$ of
${\Gamma}_j$, $\forall\, j$, such that the
normalized Hamilton vector field $H_{\wt p}$ satisfies 
\begin{equation}\label{cond0}
 \left|d H_{\wt p}(w)\restr {L_j(w)} \right| \le C \qquad \text{for
  $w \in {\Gamma}_j$ }\quad \forall\, j
\end{equation}
Since the mapping ${\Gamma}_j \ni w \mapsto L_j(w)$ is determined by
the linearization of $H_{\wt p}$ on $L_j$, thus by $d H_{\wt
  p}(w)\restr {L_j(w)}$, 
condition~\eqref{cond0} implies that 
${\Gamma}_j \ni w \mapsto L_j(w)$ is uniformly in $C^1$, see
Example~\ref{grazex}. Observe that 
condition~\eqref{hpcond} gives~\eqref{cond0} in the direction of $T_w
{\Gamma}_j \subset L_j(w)$. Clearly condition~\eqref{cond0} is 
invariant under changes of symplectic coordinates and multiplications with
non-vanishing real factors.  Now if $ 0 \ne u \in C^\infty$ has values in $\br^n$ and ${\omega}
= u/|u|$ is the  normalization, then
\begin{equation*}
 \partial {\omega} = \partial u/|u| - \w{u, \partial u}u/|u|^3
\end{equation*}
This gives that~\eqref{cond0} is equivalent to
\begin{equation}\label{cond00}
 \left|{\Pi}d\nabla p\restr {L_j} \right| \le C|\nabla p| \qquad \text{on
  ${\Gamma}_j$ }\quad \forall\, j
\end{equation}
where ${\Pi}v = v - \w{v, \nabla p}\nabla p/|\nabla p|^2$ is the orthogonal
projection along 
$\nabla p$. Condition~\eqref{cond00} gives a
uniform bound on the curvature of level surface
$p^{-1}(0)$ in the directions given by $L_j$ over ${\Gamma}_j$. 
Observe that the invariance of condition~\eqref{cond00} can be checked
directly since $d (ap) = a d p$ and 
$d^2 (ap) = a d^2 p + d a d p  = a d^2 p$ on $L_j$
over ${\Gamma}_j$. 

The invariant subprincipal symbol~$p_s$ will be
important for the solvability of the operator.
For the usual Kohn-Nirenberg quantization of
pseudodifferential operators, the subprincipal 
symbol is equal to 
\begin{equation}\label{subprinc}
 p_{s} = p_{m-1} - \frac{1}{2i} \sum_j\partial_{{\xi}_j}\partial_{x_j} p 
\end{equation}
and for the Weyl quantization it is $p_{m-1}$.

Now for the principal symbol we shall denote
\begin{equation}\label{cond1}
  0 < \min_{\Gamma_j}|H_p| = {\kappa}_j \to 0 \qquad j \to \infty
\end{equation}
and for the subprincipal symbol $p_s$ we shall assume the following
condition 
\begin{equation}\label{cond2}
 \min_{\partial \Gamma_j}\int \im p_s |H_p |^{-1}\, ds/ |\log
 {\kappa}_j| \to \infty \qquad j \to \infty
\end{equation}
where the integration is along the natural orientation given by $H_p$
on ~${\Gamma}_j$ starting at some point $w_j 
\in \overset \circ {\Gamma}_j$.  This means that the integral must
have a minimum less or equal to zero in the interior of  $ \Gamma_j $,
and starting the integration at that minimum can only
improve~\eqref{cond2}.  Thus we may assume that the integral
in~\eqref{cond2} is non-negative.  
Observe that if~\eqref{cond2} holds then there must be
a change of sign of $\im p_s$ from $-$ to $+$ on ${\Gamma}_j$, and
\begin{equation}\label{c1}
\max_{\Gamma_j} (-1)^{\pm 1}\im p_s/|H_p| |\log {\kappa}_j| \to \infty
\qquad j \to \infty 
\end{equation} 
for both signs.
Observe that condition~\eqref{cond2} is invariant under symplectic changes of
coordinates and multiplication with elliptic pseudodifferential
operators, thus under conjugation with elliptic Fourier integral
operators. In fact, multiplication only changes the subprincipal symbol 
with the same non-vanishing factors as $|H_p|$ and terms proportional
to $|\nabla p| = |H_p|$. Now by choosing
symplectic coordinates $(t,x;{\tau},{\xi})$ near a given point $w_0 \in
{\Gamma}_j$  so that $p = {\alpha}{\tau}$ near $w_0$ with ${\alpha}
\ne 0$, 
we obtain that $\partial_{x_j}\partial_{{\xi}_j} p =
0$ at~${\Gamma}_j$ and $\partial_t\partial_{\tau} p
= \partial_t {\alpha} = \partial_t|\nabla p|$ at ${\Gamma}_j$ near
$w_0$. Thus, the second term in~\eqref{subprinc} only gives terms in
condition~\eqref{cond2} which are bounded by
\begin{equation} 
\int 
\partial_t|\nabla p|/|\nabla p|\, ds/|\log({\kappa}_j)| \ls |\log(|\nabla
p|)|/|\log({\kappa}_j)| \ls 1 \qquad j \gg 1
\end{equation}
since ${\kappa}_j \le |\nabla p|$ on~${\Gamma}_j$.
Here $a \ls b$ (and $b \gs a$) means that $a \le Cb$ for some $ C > 0$.
Thus we obtain the
following remark.

\begin{rem}\label{subpr}
We may replace the subprincipal symbol $p_s$ by $p_{m-1}$
in~\eqref{cond2}, since the difference is bounded as $j \to \infty$.
\end{rem}

We shall study the microlocal solvability, which is
given by the following definition. Recall that $H^{loc}_{(s)}(X)$ is
the set of distributions that are locally in the $L^2$ Sobolev space
$H_{(s)}(X)$. 

\begin{defn}\label{microsolv}
If $K \subset S^*X$ is a compact set, then we say that $P$ is
microlocally solvable at $K$ if there exists an integer $N$ so that
for every $f \in H^{loc}_{(N)}(X)$ there exists $u \in \Cal D'(X)$ such
that $K \bigcap \wf(Pu-f) = \emptyset$. 
\end{defn}

Observe that solvability at a compact set $M \subset X$ is equivalent
to solvability at $S^*X\restr M$ by~\cite[Theorem 26.4.2]{ho:yellow},
and that solvability at a set implies solvability at a subset. Also,
by Proposition 26.4.4 in~\cite{ho:yellow} the microlocal solvability is
invariant under conjugation by elliptic Fourier integral operators and
multiplication by elliptic pseudodifferential operators.
The following is the main result of the paper.

\begin{thm}\label{mainthm}
Let $P \in {\Psi}^m_{cl}(X)$ have real principal symbol ${\sigma}(P)
= p$, and subprincipal symbol $p_s$. Let
$\set{{\Gamma}_j}_{j=1}^\infty$ be a family of bicharacteristic
intervals of $p$ in $S^*X$ so
that~\eqref{cond0} and~\eqref{cond2} hold. Then $P$ is not 
microlocally solvable at any limit bicharacteristics of~
$\set{{\Gamma}_j}_j$. 
\end{thm}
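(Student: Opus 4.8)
The plan is to prove non-solvability by constructing approximate solutions to the adjoint equation $P^*u = 0$ that concentrate near the limit bicharacteristic, thereby violating the a priori estimate that microlocal solvability would imply. Recall that if $P$ were microlocally solvable at a compact $K \subset S^*X$, then by the standard functional-analytic argument (see \cite[Section 26.4]{ho:yellow}) there would exist $N$ and $C$ such that
\begin{equation*}
 |\langle u, v\rangle| \le C \mnorm{P^*u}_{(N)} \sum_{|{\alpha}| \le N}\mnorm{D^{\alpha}v}_{(-N)}
\end{equation*}
for $u$ microlocally concentrated near $K$ and suitable test data $v$. So the goal is to manufacture, for each large $j$, a quasimode $u_j$ with wave front set clustering towards ${\Gamma}$ such that $\mnorm{P^*u_j}$ is small while $\langle u_j, v\rangle$ stays bounded below for a fixed $v$, with the losses controlled by powers of ${\kappa}_j$. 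Since solvability of $P$ is equivalent to a lower bound for $P^*$, and the hypotheses~\eqref{cond0} and~\eqref{cond2} are invariant under conjugation by elliptic Fourier integral operators and multiplication by elliptic pseudodifferential operators (as noted after Definition~\ref{microsolv}), I may first normalize the geometry.

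The first step is the normal form. Fix a point on ${\Gamma}$ and, for each $j$, a nearby point $w_j \in {\Gamma}_j$. Using the grazing Lagrangean space $L_j$ and condition~\eqref{cond0}, I want to choose symplectic coordinates $(t,x;{\tau},{\xi})$, \emph{uniformly in $j$}, so that $p = {\alpha}_j {\tau}$ with ${\alpha}_j \ne 0$, the bicharacteristic ${\Gamma}_j$ becomes $\set{(t,0;0,{\xi}_0): t \in I_j}$, and the fiber of $L_j$ becomes the standard space $\set{(s,y,0,0)}$; this is exactly the reduction flagged in the text right before~\eqref{cond0}, and the point of~\eqref{cond0} is that the curvature bound makes the reduction uniform. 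After this reduction $P^*$ has the form $\partial_t + $ lower order, with the subprincipal symbol governed by $\im p_s / |H_p|$ along the $t$-line; by Remark~\ref{subpr} I may use $p_{m-1}$ in place of $p_s$.

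The second step is the quasimode construction. I would build $u_j$ of WKB type, $u_j = e^{i{\lambda}_j {\phi}_j(t,x)} \sum_k {\lambda}_j^{-k} a_{j,k}(t,x)$, with a large parameter ${\lambda}_j \to \infty$ chosen in tandem with ${\kappa}_j$, the phase ${\phi}_j$ solving the eikonal equation associated to the $L_j$-Lagrangean (so that $\Im {\phi}_j \gtrsim 0$ and the solution concentrates on ${\Gamma}_j$ in the $x$-directions — here the uniform $C^1$ control of $L_j$ from~\eqref{cond0} and the uniform $C^\infty$ control of ${\Gamma}_j$ from~\eqref{hpcond} are what make the phase and amplitudes satisfy uniform bounds), and the transport equations for $a_{j,0}$ integrating the subprincipal symbol. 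The amplitude equation for $a_{j,0}$ along ${\Gamma}_j$ has the form $\partial_t \log a_{j,0} \sim -i\, \im p_{m-1}/|H_p| + O(1)$, so $|a_{j,0}|$ grows like $\exp(\int \im p_{m-1}/|H_p|\, ds)$; condition~\eqref{cond2} says precisely that this exponential gain beats $|\log {\kappa}_j|^{-1}$, i.e. beats the polynomial-in-${\kappa}_j$ Sobolev losses coming from the degeneration $|H_p| \downarrow {\kappa}_j$. Starting the integration at the interior minimum point $w_j$ (using the remark after~\eqref{cond2} that the integral may be taken non-negative, and the sign change from $-$ to $+$ recorded in~\eqref{c1}) ensures the amplitude is largest at the endpoints, which is what forces $\mnorm{P^*u_j} \to 0$ relative to $\mnorm{u_j}$ faster than any fixed Sobolev loss.

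The final step is the bookkeeping: estimate $\mnorm{P^*u_j}_{(N)}$ against $\mnorm{u_j}_{(-N')}$ and a fixed pairing $\langle u_j, v\rangle$, track how the factors ${\kappa}_j^{-M}$ enter (from the vanishing of $|\nabla p|$, hence the blow-up of $\wt p = p/|\nabla p|$ and its derivatives near ${\Gamma}$), and check that the ${\lambda}_j$ can be tuned so that~\eqref{cond2} overwhelms all of them; then let $j \to \infty$ and conclude that no estimate of the above form can hold, hence $P$ is not microlocally solvable at ${\Gamma}$. The main obstacle I anticipate is the \emph{uniformity} in the quasimode construction — ensuring that the eikonal and transport equations can be solved on intervals of uniformly bounded length with amplitudes obeying $j$-independent bounds in all the relevant seminorms despite $|H_p|$ degenerating to order ${\kappa}_j$. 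This is exactly where hypotheses~\eqref{cond0} and~\eqref{hpcond} must be leveraged carefully: they control the normalized flow $H_{\wt p}$ and the Lagrangean section $L_j$ uniformly, but the unnormalized objects (which is what actually appears in $P$) carry the singular factor $|H_p|^{-1}$, so the rescaling between arc-length parametrization and ${\tau}$-time must be handled with care, and a careful choice of ${\lambda}_j$ (polynomially related to ${\kappa}_j^{-1}$) is needed to absorb it. A secondary technical point is controlling the error from truncating the WKB expansion and from the cutoffs needed to localize microlocally near ${\Gamma}$ without destroying the concentration.
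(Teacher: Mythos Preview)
Your overall architecture matches the paper: reduce to an a priori estimate for $P^*$, put $P^*$ in normal form near ${\Gamma}_j$ using the grazing Lagrangean section and~\eqref{cond0}, then build WKB quasimodes by solving eikonal and transport equations, with ${\lambda}_j$ polynomially tied to ${\kappa}_j^{-1}$. That is exactly the paper's strategy, carried out through Propositions~\ref{prepprop}--\ref{transprop} and Lemma~\ref{estlem}.

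However, you have the mechanism of the subprincipal integral backwards, and this is a genuine gap. You write that $|a_{j,0}|$ grows like $\exp\bigl(\int \im p_{m-1}/|H_p|\,ds\bigr)$ and is ``largest at the endpoints,'' and that this ``gain'' beats the Sobolev losses. But you are building quasimodes for $P^*$, not $P$: passing to the adjoint conjugates the subprincipal symbol, so condition~\eqref{cond2} becomes~\eqref{newcond2} with the sign reversed, and the transport equation gives $\partial_t\log|{\varphi}_0| = \im q_0 \le 0$ along ${\Gamma}_j$ (see~\eqref{c3}). Thus the amplitude is \emph{maximal at the interior point $w_j$ and decays to $\Cal O({\lambda}_j^{-N})$ near $\partial{\Gamma}_j$} by~\eqref{rcond2}. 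The role of this decay is not to make $\mn{u_j}$ large; it is to make the $t$-cutoff harmless: the commutator $[Q,{\chi}]u_{\lambda} = D_t{\chi}\cdot u_{\lambda}$ is supported where ${\varphi}_k = \Cal O({\lambda}^{-N})$, so one obtains compactly supported $u_{{\lambda}_j}$ with $Qu_{{\lambda}_j} = \Cal O({\lambda}_j^{-N})$ for every $N$. The contradiction with~\eqref{solvest} then comes from the purely kinematic bounds $\mn{u_{{\lambda}_j}}_{(-N)} \gs {\lambda}_j^{-N-n/2}$ versus $\mn{u_{{\lambda}_j}}_{(-N-n)} \ls {\lambda}_j^{-N-n}$ of Lemma~\ref{estlem}, not from an exponential amplitude gain.

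A second point you underestimate: the normal form is not globally ``$\partial_t + \text{lower order}$.'' Because $|\nabla p|$ degenerates like ${\kappa}_j$, the normalized symbol $\wt p$ is uniformly controlled only \emph{on} ${\Gamma}_j$ and along $L_j$; off ${\Gamma}_j$ one must work in $S^{\mu}_{1-{\varepsilon},{\varepsilon}}$ classes and localize to a ${\lambda}^{-{\varepsilon}}$-neighborhood (Proposition~\ref{wtpest} and the rescaled Malgrange preparation leading to~\eqref{locprep}). Your sketch of the normal form does not account for this, and in particular the eikonal equation is solved in the paper with a \emph{real} phase ${\omega} \in S({\lambda}^{-7{\varepsilon}},g_{3{\varepsilon}})$ plus amplitude cutoffs of width ${\lambda}^{-{\delta}}$, rather than a complex phase with $\im{\phi}_j \ge 0$ as you propose. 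A complex-phase variant might be made to work, but the uniform control of $\im{\phi}_j$ off ${\Gamma}_j$ would require exactly the second-order information in the $L_j$-transverse directions that~\eqref{cond0} does \emph{not} provide; the paper sidesteps this by using $\partial_x^2{\omega}(t,0)\equiv 0$ and compactly supported amplitudes instead.
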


To prove  Theorem~\ref{mainthm} we shall use the following result. Let
$\mn{u}_{(k)}$ be the $L^2$ Sobolev norm of order $k$ for $u \in
C_0^\infty$ and $P^*$ the $L^2$ adjoint of $P$.

\begin{rem} \label{solvrem}
If $P$ is microlocally solvable at ${\Gamma}\subset S^*X$,
then Lemma 26.4.5 in~\cite{ho:yellow} gives that for any $Y \Subset
X$ such that ${\Gamma} \subset S^*Y$ there exists an integer ${\nu}$ 
and a pseudodifferential operator $A$ so that
$\wf(A) \cap {\Gamma} = \emptyset$ and
\begin{equation}\label{solvest}
 \mn {u}_{(-N)} \le C(\mn{P^*{u}}_{({\nu})} + \mn {u}_{(-N-n)} +
 \mn{Au}_{(0)}) \qquad u \in C_0^\infty(Y)
\end{equation}
where~$N$ is given by Definition~\ref{microsolv}. Since that
definition also holds for larger $N$, we may take ${\nu} = N \ge 0$.
\end{rem}

We shall use Remark~\ref{solvrem} in Section~\ref{pfsect} to prove
Theorem~\ref{mainthm} by constructing approximate local solutions to
$P^* u = 0$. We shall first prepare and get a microlocal normal form
for the adjoint operator, which will be done in
Section~\ref{norm}. Then we shall solve the eikonal equation
in Section~\ref{eik} and the transport equations in Section~\ref{transp}.

\section{Examples}\label{exem}

\begin{exe}
Let $P$ have principal symbol $p = \prod_j p_j$ which is a product of
real symbols~$p_j$ of principal type, such that $p_j = 0$ on $\st$,
$\forall\, j$, and $p_j \ne p_k$ on ${\Sigma}\setminus {\Sigma}_2$ when $j 
\ne k$. We find that if ${\Gamma} \subset p_k^{-1}(0)$ then 
$$
|H_{p}|  = |H_{p_k}| \prod_{j \ne k}|p_j| = |H_{p_k}|q_k
$$
where  $q_k > 0$ on ${\Gamma}\setminus \st$ close to $\st$.
Then $p$ satisfies~\eqref{hpcond} and~\eqref{cond0} for any section of
Lagrangean spaces in $ T p_k^{-1}(0) $ at $ \Gamma \setminus \st$ by the invariance, since $\nabla 
p_k \ne 0$ and $\partial^{\alpha} p_k = \Cal O(1)$, $\forall\, {\alpha}$.  
A bicharacteristic ${\Gamma} \subset {\Sigma}\setminus {\Sigma}_2$ of $p$ is
a bicharacteristic for $p_k$ for some $k$. Then if $p_s$ is the
subprincipal symbol and  
\eqref{cond2} is satisfied for a  
sequence of bicharacteristics of $p_k$ converging in~$C^\infty$
to~$\st$, we obtain that the operator is not solvable at any limit of these
bicharacteristics at~$\st$. 
\end{exe}

\begin{exe}
Assume that $p(x,{\xi})$ is real and vanishes of exactly order $k\ge 2$ at the
involutive submanifold $\st = \set{{\xi}'=0}$, ${\xi}=
({\xi}',{\xi}'') \in \br^m \times \br^{n-m}$, such that the
localization 
$${\eta} \mapsto \sum_{|{\alpha}| = k}
\partial_{{\xi}'}^{\alpha} p(x,0,{\xi}'') {\eta}^{\alpha}
$$ 
is of
principal type when ${\eta} \ne 0$. Then the bicharacteristics of $p\/$
satisfies~\eqref{hpcond} and \eqref{cond0}  
with $L_j = \set{{\xi}=0}$ at any point. In fact, 
$
|\partial_{{\xi}'} p(x,{\xi})|
\cong |{\xi}'|^{k-1}
$
and $\partial_{x,{\xi}''} p(x,{\xi}) = \Cal
O(|{\xi}'|^{k})$ 
so we obtain this since $H_{\wt p} = \partial_{{\xi}'}\wt p \partial_{x'} +
\Cal O(|{\xi}'|)$ and $\partial_x^{\alpha}
\nabla p = \Cal O(|{\xi}'|^{k-1})$
when $|{\xi}'| \ll 1$ and $|{\xi}| \cong 1$.
The operator is not solvable if the
imaginary part of the subprincipal symbol $\im p_s$ changes sign from
$-$ to $+$ along a convergent sequence of  bicharacteristics of the
principal symbol and 
vanishes of at most order $k-2$ at $\st$. In particular we obtain the results
of~\cite{MU1} and~\cite{Men}.
\end{exe}

\begin{exe}
As in Example~\ref{rootex}, let $P$ have real principal symbol $p = w_1^k -
a(w')$ in the coordinates 
$(w_1,w') \in T^*\br^n\setminus 0$, where $w_1 = a(w') = 0$ at ${\Sigma}_2$. We
find that~\eqref{cond0} is 
satisfied if $d w_1\restr{L_j} = 0$ and $d
\nabla a\restr{L_j} = \Cal O(|\nabla a|)$ when $w_1 = 0$. 
\end{exe}

\begin{exe}\label{quadratic}
Let $Q$ be a real and hyperbolic quadratic form on $T^*\br^n$. Then
by~\cite[Theorem 1.4.6]{ho:cauchy} we have  the following normal form
$ 
Q_1(x,{\xi}) + Q_2(y,{\eta})
$ 
where the symplectic coordinates $ (x,y; \xi, \eta) \in T^*(\br^m \times \br^r )$,
\begin{equation*}
 Q_1(x,{\xi}) = \sum_{j=1}^{k} {\mu}_j (x_j^2 + {\xi}_j^2) +
 \sum_{j=k+1}^{m} {\xi}_j^2  \qquad {\mu}_j > 0\quad \forall\, j
\end{equation*}
is positive semidefinite and $Q_2(y,{\eta})$ is either $-{\eta}_1^2$,
${\mu}_0y_1{\eta}_1$ with $ \mu_0 \ne 0 $ or $2{\eta}_1{\eta}_2 - y_2^2$.
To simplify the notation, we will assume ${\mu}_j = 1$, $\forall\,
j$. 

Now the Hamilton vector field $ H_Q(x,y; \xi, \eta)  = H_{Q_1}(x,\xi) +  H_{Q_2}(y,\eta) $ 
so the flow of $H_Q$ is a direct sum of  the flows of $ H_{Q_1} $ and $H_{Q_2}$.
For $H_{Q_1}$ it is a direct sum of the flows on the circles in $(x_j, {\xi}_j)$ of 
radius $r_j \to 0$, $j \le k$, and the flows on  $x_j$ lines, $k < j \le m$. The
circles can only converge in $C^\infty$ to the origin if the radii~$r_j$ go to zero fast
enough, see Example~\ref{circlex}. The
possible limits are then points or lines in the $x_{k+1},\dots, x_m$ space. 

In the case $Q_2(y,{\eta}) = -{\eta}_1^2$ we find that the limit
bicharacteristics are given by 
\begin{equation}\label{limbic}
 y_1 = {\lambda}t, \ x_j = {\lambda}_jt + a_j, \ k < j \le m
\end{equation}
where ${\lambda}^2 = \sum_{j= k+1}^{m} {\lambda}_j^2$. We can only
find a Lagrangean space satisfying~\eqref{cond0} if ${\mu}_j = 0$,
$\forall\, j$, since such a space cannot be contained in the subspace  $\set{x_j
  = {\xi}_j = 0}$. Then the grazing Lagrangean space can be chosen as~
$\set{(x,y;0,0)}$ at every point of the bicharacteristic.
Theorem~\ref{mainthm} then gives that the operator with homogeneous
principal symbol 
equal to $Q(x,y;{\xi},{\eta})$   is
not solvable when the imaginary part of the 
subprincipal symbol changes sign on the lines on $\st$ given
by~\eqref{limbic}, which also follows from the results in~\cite{Men}. 

If $Q_2(y,{\eta}) = y_1{\eta}_1$, then $Q^{-1}(0) = \set{ y_1{\eta}_1
  = -Q_1(x,{\xi}) = -{\lambda}^2}$ where $|H_{Q_1}| \cong
{\lambda}$. The Hamilton vector field  
of $Q_2$ is given by $ H_{Q_2} = y_1\partial_{y_1} -\eta_1
\partial_{\eta_1} $ which has flow $t \mapsto 
(y_1e^{t};{\eta}_1e^{-t})$ with starting point $(y_1,{\eta}_1)$. Here $y_1{\eta}_1
= -{\lambda}^2 < 0$ which gives $|y_1| + |{\eta}_1| \gs |y_1\eta_1|^{1/2} \gs
{\lambda}$. We find that $|H_Q| \cong L =|y_1| + |{\eta}_1|$, so in
order for the bicharacteristic to converge in $C^\infty$, we find
from~\eqref{hpcond} that $(|y_1| + |{\eta}_1|)/L^k = L^{1-k}\ls 1$,
$\forall \, k$. But then  
$|y_1| + |{\eta}_1| \not \to 0$ so the bicharacteristics do not
converge in $C^\infty$ to a limit bicharacteristic at~$\st$. 
Observe that a hyperbolic operator with
Hessian of the principal symbol equal to $Q(x,y;{\xi},{\eta})$ 
at~$\st\bigcap S^*\br^n$ 
is {\em effectively hyperbolic} and is solvable with any lower
order terms, see the Notes to Chapter 23 in \cite{ho:yellow}.

Finally, when $Q_2(y,{\eta}) = 2{\eta}_1{\eta}_2 - y_2^2$, then the
characteristics in the $(y,{\eta})$ variables are
$\set{{\eta}_1{\eta}_2 = (y_2^2 + {\lambda}^2)/2}$
where ${\lambda}^2 = Q_1$. We
find $H_{Q_2} = 2{\eta}_2 \partial_{y_1} + 2{\eta}_1\partial_{y_2} +
2y_2\partial_{{\eta}_2} $ so ${\eta}_1$ is constant on the
orbits. Note that $y_2^{2} + {\lambda}^2 = 2 {\eta}_1{\eta}_2 \le
{\eta}_1^2 + 
{\eta}_2^2$. Thus when $|{\eta}_1| \gs |{\eta}_2|$ we find that
$|{\eta}_1| \gs |y_2| + {\lambda}$ on~${\Sigma}$. Now $| H_{ \wt Q_2}
\nabla \wt Q_2| \gs  \eta_1^{-1}$ so in order 
for~\eqref{hpcond}  
to hold we must have $|{\eta}_1| \gs 1$, so the characteristics will
in that case not converge in $C^\infty$ to any limit bicharacteristic at~$\st$.
When $|{\eta}_2| \gg |{\eta}_1|$ we find that
$|{\eta}_2| \gg |y_2| + {\lambda}$ on~${\Sigma}$. 
A straighforward computation shows that applying $ H_{ \wt Q} $ twice  on $
\nabla \wt Q $ gives a factor $ \eta_1/\eta_2^3 $. Thus, 
by~\eqref{hpcond}  we find that the
bicharacteristics converge in $C^\infty$ to the $y_1$ lines on~$\st$ only if 
$|{\eta}_1| \ls |{\eta}_2|^3$, which implies that $|y_2| + {\lambda} 
\ls {\eta}_2^2$. Then $H_{\wt Q} = \partial_{y_{1}} + \mathcal
O(|{\eta}_2|)$ so the bicharacteristics can only converge to $y_1$ lines
at ${\eta}_2 \to 0$.  
An example is the case when $y_2 = {\eta}_1 = {\lambda} = 0$ and ${\eta}_2 \to 0$, then $ H_{Q_2} = 2{\eta}_2 \partial_{y_1} $ and the grazing Lagrangean space can be chosen as 
$\set{(x,y_1,0;0,0,{\eta}_2)}$ at every point of the bicharacteristics.
Thus Theorem~\ref{mainthm}
gives that the operator with homogeneous principal symbol
equal to $Q(x,y;{\xi},{\eta})$ when $|{\xi}|^2 + |{\eta}|^2 = 1$ is
not solvable when the imaginary part of the 
subprincipal symbol changes sign on $y_1$ lines at $\st$.
\end{exe}

\section{The normal form}\label{norm}

First we shall put the adjoint operator $P^*$ on a
normal form uniformly and microlocally near the bicharacteristics
${\Gamma}_j$ converging to ${\Gamma}$. 
This will present some difficulties since we only have conditions at the
bicharacteristics. By the invariance, we may multiply with an elliptic
operator so that the order of $P^*$ is $m=1$ and $P^*$ has the symbol
expansion $p + p_0 + 
\dots$, where $p$ is the principal symbol. As in Remark~\ref{subpr} we may assume
that $p_0$ is the subprincipal 
symbol. Observe that for the adjoint the signs in~\eqref{cond2} are
reversed and it changes to  
\begin{equation}\label{newcond2} 
\max_{\partial \Gamma_j}\int \im p_0 |H_p |^{-1}\, ds/ |\log
 {\kappa}_j| \to -\infty \qquad j \to \infty
\end{equation}
where  ${\kappa}_j$ given by~\eqref{cond1}.
By changing $w_j$ to the maximum of the integral
in~\eqref{newcond2}  only improves the estimate so we may assume that
\begin{equation}\label{c3}
 \int \im p_0/|H_p|\, ds \le 0 \qquad \text{on ${\Gamma}_j$}
\end{equation}
with equality at $w_j \in {\Gamma}_j$.
Since $\nabla \im p_0$ and $\nabla H_p$ are bounded on $S^*X$ and $|H_p| \ge
{\kappa}_j$ on~${\Gamma}_j$, we find that
$ |H_p |$ and
$\im p_0/|H_p|$ only change with a fixed factor and a bounded term on an interval of
length $\ll {\kappa}_j$ on $ \Gamma_j $. Therefore, we find
that integrating $\im p_0/|H_p|$ over such intervals only gives bounded terms.
Thus, we find  from~\eqref{c1} that 
\begin{equation}\label{c4}
 |{\Gamma}_j| \gg {\kappa}_j 
\end{equation}
and that condition~\eqref{cond2} holds on intervals of length
$\cong {\kappa}_j$ at the endpoints of ${\Gamma}_j$.

Now we choose
\begin{equation}\label{ldef}
1 \ls {\lambda}_j =
{\kappa}_j^{-1/{\varepsilon}} \iff {\kappa}_j =
{\lambda}_j^{-{\varepsilon}}
\end{equation}
for some ${\varepsilon} > 0$ to be determined later.
Then we may replace$|\log {\kappa}_j|$  with
$\log {\lambda}_j$ in \eqref{cond2}--\eqref{c1}. 
By choosing a subsequence and renumbering, we may assume
by~\eqref{cond2} that 
\begin{equation}\label{mcond2}
 \min_{\partial \Gamma_j} \int \im p_0/|H_p| \, ds \le  -j\log {\lambda}_j
\end{equation}
and that this holds on intervals of length $\cong {\kappa}_j$ at the
endpoints of ${\Gamma}_j$. Next, we introduce the normalized principal
and subprincipal symbols 
\begin{equation*}
 \wt p = p/|H_p|  \qquad \text{and} \qquad p_s = p_{0}/|H_p|
\end{equation*}
then we have that $H_{\wt p}\restr {\Gamma_j} \in C^\infty$ uniformly,
$|H_{\wt p}| = 1$ on ${\Gamma}_j$ and $dH_{\wt p}\restr {L_j}$ is
uniformly bounded at ${\Gamma}_j$. We find that
condition~\eqref{mcond2} becomes
\begin{equation}\label{rcond2}
 \min_{\partial \Gamma_j}\int \im p_s \, ds  \le  -j\log {\lambda}_j
\end{equation}
Observe that because of
condition~\eqref{c1} we have that 
$\partial {\Gamma}_j$ has two components since $\im p_s$ has opposite
sign there, so ${\Gamma}_j$ is a uniformly embedded curve.

In the following we shall consider a
fixed bicharacteristic ${\Gamma}_j$ and suppress the index $j$, so
that ${\Gamma} = {\Gamma}_j$, $L = L_j$ and ${\kappa} = {\kappa}_j =
{\lambda}^{-{\varepsilon}}$. Observe that the preparation will be
uniform in~$j$. Now $H_{\wt p} \in C^\infty$ uniformly on~${\Gamma}$
but not in a neighborhood. By~\eqref{cond0} we may define the first jet
of~$\wt p$ at~${\Gamma}$ uniformly. 
Since ${\Gamma} \in C^\infty$ uniformly, we can choose local
coordinates uniformly so that ${\Gamma} = \set{(t,0)):\ t \in I \subset
  \br}$ locally. In fact, take a local parametrization
${\gamma}(t)$ of ${\Gamma}$ with respect to the arclength and choose the
orthogonal space $M$ 
to the tangent vector of ${\Gamma}$ at a point $w_0$ with respect to some
local Riemannean metric. Then $\br \times
M \ni (t,w) \mapsto {\gamma}(t) + w $ is uniformly bounded in $C^\infty$
with uniformly bounded inverse near $(t_0,0)$ giving local coordinates
near~${\Gamma}$. We may then complete~$ t $ to a symplectic coordinate
system uniformly. We can define the first order Taylor term of $\wt p$ at
${\Gamma}$ by 
\begin{equation}\label{qdef}
{\varrho}(t,w) = \partial_{w} \wt 
p(t,0)\cdot w \qquad w = (x,{\tau},{\xi})
\end{equation}
which is uniformly bounded.
This can be done locally, and by using a uniformly bounded partition of 
unity we obtain this in a fixed neighborhood of
${\Gamma}$. Going back to the original coordinates, we find that
${\varrho} \in C^\infty$ uniformly near~${\Gamma}$ and $\wt p - 
{\varrho} = \Cal O(d^2)$, but the
error is not uniformly bounded. Here $d$ is the homogeneous distance
to ${\Gamma}$, i.e., the distance with respect
to the homogeneous metric $dt^2 + |dx|^2 + (d{\tau}^2 +
|d{\xi}|^2)/\w{({\tau},{\xi})}^2$. 
But by condition~\eqref{cond0} we find
that the second order derivatives of $\wt p$ along the Lagrangean space $L$
at ${\Gamma}$ are uniformly bounded. 
We shall use homogeneous coordinates, i.e., local coordinates which are normalized 
with respect to the homogeneous metric. 

By completing ${\tau} = {\varrho}$ in~\eqref{qdef} to a uniformly
bounded homogeneous symplectic  
coordinate system near ${\Gamma}$ and conjugating with the
corresponding uniformly bounded Fourier integral 
operator we may assume that
\begin{equation}\label{gammaco}
{\Gamma} = \set{(t,0;0, {\xi}_0): \ t \in I} 
\end{equation} 
for  $|{\xi}_0| = 1$, some
fixed interval $I \ni 0$, and $\wt p \cong {\tau}$ modulo second order terms
at ${\Gamma}$. The second order terms are not uniformly bounded, but
$d\nabla  \wt p \restr {L}$ is uniformly bounded at ${\Gamma}$
by~\eqref{cond0}. Since $d\wt p = d \tau$ on~${\Gamma}$ we 
find that $H_{\wt p}\restr {\Gamma} = D_t$ and we may obtain 
that $L =  \set{(t,x;0,0)}$ at any given point at~${\Gamma}$ by 
choosing linear symplectic coordinates~$ (x,\xi) $.

Let 
\begin{equation}\label{wtpdef}
 q(t,w) =  |\nabla p(t,w)| \ge 
{\kappa} = {\lambda}^{-{\varepsilon}} \qquad \text{ at  
${\Gamma}$}
\end{equation}
and extended so that $q$ is homogeneous of degree $0$, which is the norm of
the  homogeneous gradient of~$p$. Recall that $ \lambda $ is a parameter that depends on the 
bicharacteristic~$ \Gamma $.
We shall change coordinates so that $(t,w) = 0$ corresponds to the point
$(0,0; 0, {\xi}_0) \in {\Gamma}$ given 
by~\eqref{gammaco}, and then localize in neighborhoods depending on~$ \lambda $. 
We have $|\nabla \wt p| \equiv 1 $ at~${\Gamma}$,
higher derivatives are not uniformly bounded but can be handled by
the using the metric 
\begin{equation*}
 g_{\varepsilon} = (dt^2 +
|dw|^2){\lambda}^{2{\varepsilon}}
\end{equation*}
and the symbol classes $f \in S(m,g_{\varepsilon})$ defined by  
$\partial^{\alpha} f = \Cal O(m{\lambda}^{|{\alpha}|{\varepsilon}})$,
$\forall\, {\alpha}$.

\begin{prop}\label{wtpest}
We have that $q$ is a weight for  $g_{\varepsilon}$, $q  \in S(q,
g_{\varepsilon})$ and $\wt p(t,w) \in
S({\lambda}^{1-{\varepsilon}}, 
g_{\varepsilon} )$ when $|w| 
 \le c{\lambda}^{-{\varepsilon}}$  for some $c > 0$  when $t \in I$.
\end{prop}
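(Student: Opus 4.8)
The plan is to verify the three assertions---$q$ is a $g_{\varepsilon}$-weight, $q \in S(q, g_{\varepsilon})$, and $\wt p \in S({\lambda}^{1-{\varepsilon}}, g_{\varepsilon})$ on the ball $|w| \le c{\lambda}^{-{\varepsilon}}$---by combining the uniform $C^\infty$ bounds along ${\Gamma}$ coming from~\eqref{hpcond}, the uniform curvature bound~\eqref{cond0} (equivalently~\eqref{cond00}) along $L$, and the lower bound $q \ge {\kappa} = {\lambda}^{-{\varepsilon}}$ on ${\Gamma}$ from~\eqref{wtpdef}. First I would recall that in the normalized homogeneous symplectic coordinates fixed in~\eqref{gammaco}, we have $L = \set{(t,x;0,0)}$ and $\wt p \cong {\tau}$ modulo second order terms at ${\Gamma}$, with $d\nabla \wt p\restr{L}$ uniformly bounded by~\eqref{cond0}. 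The point is that the only derivatives of $\wt p$ that are \emph{not} uniformly bounded are the second-order derivatives transverse to $L$, i.e. those hitting the $({\tau},{\xi})$ variables at least twice; these may blow up, but the Taylor remainder control will show they blow up no faster than a power of ${\lambda}^{\varepsilon}$ is needed, precisely because the ball has radius $c{\lambda}^{-{\varepsilon}}$.

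The main computational step is the Taylor estimate for $\wt p$. On ${\Gamma}$ we have $\wt p = 0$, $\nabla \wt p = (0,\dots,0,1,\dots)$ (the ${\tau}$-direction), and along $L$ all derivatives are uniformly $C^\infty$-bounded; the worst behaviour is
\begin{equation*}
 \partial_w^2 \wt p(t,w) = \Cal O\big(q(t,0)^{-1}\big) \cdot \Cal O(1) = \Cal O({\kappa}^{-1}) = \Cal O({\lambda}^{\varepsilon})
\end{equation*}
near ${\Gamma}$, since differentiating $\wt p = p/|\nabla p|$ twice produces at worst one extra factor $|\nabla p|^{-1} \le {\kappa}^{-1}$, and higher derivatives produce at worst $|\nabla p|^{-k+1} \le {\kappa}^{-(k-1)}$, while the numerators stay $\Cal O(1)$ by the boundedness of $\nabla p$ and its derivatives on $S^*X$. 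Hence for $|w| \le c{\lambda}^{-{\varepsilon}}$, Taylor expansion at $(t,0)$ gives $\wt p(t,w) = {\tau} + \Cal O({\lambda}^{\varepsilon}) |w|^2 = \Cal O({\lambda}^{-\varepsilon}) + \Cal O({\lambda}^{\varepsilon} \cdot {\lambda}^{-2\varepsilon}) = \Cal O({\lambda}^{-\varepsilon})$, which is even better than the claimed $\Cal O({\lambda}^{1-\varepsilon})$ since ${\varepsilon} < 1$; and differentiating $\alpha$ times costs at most ${\kappa}^{-|{\alpha}|} = {\lambda}^{|{\alpha}|{\varepsilon}}$, giving $\partial^{\alpha}\wt p = \Cal O({\lambda}^{1-\varepsilon}{\lambda}^{|{\alpha}|{\varepsilon}})$ as required for membership in $S({\lambda}^{1-{\varepsilon}}, g_{\varepsilon})$. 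For $q$ itself: $q$ is homogeneous of degree $0$, so $\partial^{\alpha} q = \Cal O(1)$ on $S^*X$ in the original coordinates; after the uniformly bounded change to homogeneous coordinates the bound is still $\Cal O(1)$, and since $q \ge {\kappa} = {\lambda}^{-\varepsilon}$ on ${\Gamma}$ while $|\nabla q| = \Cal O(1)$ forces $q \cong q(t,0)$ on the ball of radius $c{\lambda}^{-\varepsilon}$ (for $c$ small), we get both $q \in S(q, g_{\varepsilon})$ and the slowly-varying/temperate conditions making $q$ a weight for $g_{\varepsilon}$.

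The step I expect to be the genuine obstacle is controlling the transverse second derivatives \emph{uniformly in $j$}: one must argue that~\eqref{cond0}, which a priori only bounds $dH_{\wt p}\restr{L_j}$, together with $q \ge {\kappa}_j$, really does cap the blow-up of $\partial_{({\tau},{\xi})}^2 \wt p$ at ${\kappa}_j^{-1}$ and not worse, and that the constants in all the $\Cal O$'s are independent of $j$ (hence of ${\lambda}$). This requires being careful that the change of coordinates bringing ${\Gamma}_j$ to the form~\eqref{gammaco} and $L_j$ to $\set{(t,x;0,0)}$ is uniformly bounded in $C^\infty$ with uniformly bounded inverse---which was established in Section~\ref{norm}---so that the symbol estimates transfer with fixed constants. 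Once the uniformity of the coordinate changes and of the bound $\partial^{\alpha}\wt p = \Cal O({\kappa}^{-|{\alpha}|+1})$ near ${\Gamma}$ (the $+1$ coming from the single power of $q$ absorbed into the weight ${\lambda}^{1-\varepsilon} = {\kappa}\,{\lambda} \cong q\,{\lambda}$, using $q \cong {\kappa}$ and ${\lambda} = {\kappa}^{-1/\varepsilon}$... more simply ${\lambda}^{1-\varepsilon} \gtrsim 1 \gtrsim |\wt p|$ on the small ball) is in hand, the rest is a routine verification against the definitions of weight and of $S(m, g_{\varepsilon})$.
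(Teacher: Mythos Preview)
Your plan has one genuine error and one conceptual misdirection.

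The error is the claim that ``$q$ is homogeneous of degree~$0$, so $\partial^{\alpha} q = \Cal O(1)$ on $S^*X$''. This is false for $|{\alpha}| \ge 2$. Since $q = |\nabla p|$ is the square root of the smooth function $|\nabla p|^2$, its higher derivatives blow up as $q \to 0$: from $\partial q = (\nabla p \cdot \partial\nabla p)/q$ one sees that $\partial q = \Cal O(1)$, but differentiating once more already produces a term of size $q^{-1}$. The correct bound is $\partial^{\alpha} q = \Cal O(q^{1-|{\alpha}|})$, proved by induction on $|{\alpha}|$ from the formula for $\partial q$ (or equivalently from $\partial^{\alpha}(q^2) = \Cal O(1)$). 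This is exactly the content of the paper's proof: first $\partial q = \Cal O(1)$ gives $q(s,w) \cong q(t,0)$ on the ball of radius $c{\lambda}^{-{\varepsilon}}$, so $q$ is a weight; then $|p| \ls q{\lambda}^{-{\varepsilon}}$, $|\nabla p| = q$, $|\partial^{\alpha} p| \ls 1 \ls q{\lambda}^{{\varepsilon}}$ for $|{\alpha}| \ge 2$ give $p \in S(q{\lambda}^{-{\varepsilon}}, g_{\varepsilon})$; finally the inductive bound on $\partial^{\alpha} q$ gives $q \in S(q, g_{\varepsilon})$, whence $\wt p = p/q$ lies in the stated class. Your heuristic ``the numerators stay $\Cal O(1)$'' hides exactly this point: the ``numerators'' arising from the quotient rule contain products of $\partial^{\gamma} q$, which are \emph{not} $\Cal O(1)$.

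The misdirection is the appeal to~\eqref{hpcond} and~\eqref{cond0}. Neither is used, nor needed, for this proposition: the argument requires only that $p$ is smooth on $S^*X$ (hence $\partial^{\alpha} p = \Cal O(1)$ in homogeneous coordinates) and that $q \ge {\lambda}^{-{\varepsilon}}$ on~${\Gamma}$. Your ``genuine obstacle'' paragraph, worrying whether~\eqref{cond0} suffices to cap the transverse second derivatives of $\wt p$ at ${\kappa}^{-1}$, addresses a non-issue: \emph{all} second derivatives of $\wt p$, in every direction, are $\Cal O(q^{-1}) \le \Cal O({\lambda}^{{\varepsilon}})$ on the ball, simply by the quotient rule and $q \gs {\lambda}^{-{\varepsilon}}$---no distinction between $L$-tangential and transverse directions is relevant here. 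Condition~\eqref{cond0} enters later (in Proposition~\ref{symplprop} and the eikonal equation), not in this estimate.
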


This gives  $p = q\wt p \in 
S(q{\lambda}^{1-{\varepsilon}}, g_{\varepsilon})$ when $|w| 
\le c{\lambda}^{-{\varepsilon}}$. 
Observe that $b \in S^{\mu}_{1-{\varepsilon}, {\varepsilon}}$ if
and only if 
$b \in S({\lambda}^{\mu}, g_{\varepsilon})$ in homogeneous coordinates
when $|{\xi}| \gs  
{\lambda}  \gs 1$. In fact, in homogeneous coordinates $z $ this
means that $\partial_z^{\alpha} b = \Cal O(|{\xi}|^{{\mu} +
  |{\alpha}|{\varepsilon}})$ when $|z| \cong 1$.

\begin{proof}
We shall use the previously chosen coordinates $(t,w)$ so that ${\Gamma} =
\set{(t,0): \ t \in I}$. Now $\partial^2p = \Cal
O(1)$,  $q \ge {\lambda^{-\varepsilon}}$ at ${\Gamma}$ by~\eqref{wtpdef} and
\begin{equation}\label{qder}
 \partial q = \nabla p\cdot (\partial\nabla p)/q 
 \qquad \text{when $q \ne 0$} 
\end{equation}
which is uniformly bounded.
We find that $q(s,w) \cong q(t,0)$ 
when $|s-t| + |w| \le c {\lambda^{-\varepsilon}}$ for small enough $c > 0$,
so $q$ is a 
weight for $g_{\varepsilon}$ there. This gives that 
$|p(t,w)| \ls q(t,w){\lambda^{-\varepsilon}}$, 
$|\nabla p(t,w)| = q(t,w)$ and $|\partial^\alpha p | \ls 1 \ls q{\lambda^{\varepsilon}}$ 
for $ |\alpha| \ge 2 $,
which gives $p \in 
S(q{\lambda^{-\varepsilon}}, g_{\varepsilon})$ when $|w| 
\le c{\lambda^{-\varepsilon}}$.  

We find from~\eqref{qder} that $ \partial  q = 
\Cal O(q{\lambda^{\varepsilon}})$  when $|w| 
\le c{\lambda^{-\varepsilon}}$,
since $\nabla p \in S(q, g_{\varepsilon})$ in this domain. By
induction over the order of differentiation we obtain from~\eqref{qder} that $q \in 
S(q,g_{\varepsilon})$ 
when $|w| \le c{\lambda^{-\varepsilon}}$, which gives the result. 
\end{proof}

Next, we put $Q(t,w) = {\lambda^{\varepsilon}}\wt
p(t{\lambda^{-\varepsilon}},w{\lambda^{-\varepsilon}})$ when  $t \in 
I_{\varepsilon}  = \set{t{\lambda^{\varepsilon}}:\ t \in I}$. Then by
Proposition~\ref{wtpest} we find that $Q \in C^\infty$ 
uniformly when $|w| \ls 1$ and  $t \in I_{\varepsilon}$,
$\partial_{\tau} Q \equiv 1$  and $|\partial_{t,x,{\xi}} Q| 
\equiv 0$ when $w = 0$ and $t \in I_{\varepsilon}$. Thus we find
$|\partial_{\tau} Q| \ne 0$ for 
$|w| \ls 1$ and $t \in I_{\varepsilon}$. 
By using Taylor's formula at~$ \Gamma $ we
can write $Q(t,x;{\tau},{\xi}) = {\tau} + h(t,x;{\tau},{\xi})$
when $|w| \ls 1$ and  $t \in I_{\varepsilon}$,
where $h = |\nabla h| = 0$ at $w = 0$.
By using the Malgrange preparation theorem, we find
\begin{equation*}
{\tau} = a(t,w)({\tau} + h(t,w)) + s(t,x,{\xi})
 \qquad |w| \ls 1 \quad t \in I_{\varepsilon}
\end{equation*}
where $a$ and $s \in C^\infty$ uniformly, $a \ne 0$, and on
${\Gamma}$ we have $a = 1$ and $s =
|\nabla s| = 0$. In fact, this can be done locally in $t$ and by a
uniform partition of unity for $t \in I_\varepsilon$. This gives 
\begin{equation}\label{sdef}
a(t,w)Q(t,w) = 
{\tau} -  s(t,x,{\xi})
 \qquad |w| \ls 1 \quad t \in I_{\varepsilon}
\end{equation}
In the original coordinates, we find that 
\begin{equation*}
 {\lambda}^{\varepsilon}\wt p(t,w) =
 a^{-1}(t {\lambda}^{\varepsilon},w {\lambda}^{\varepsilon})({\tau}
 {\lambda}^{\varepsilon} 
 - s(t {\lambda}^{\varepsilon},x {\lambda}^{\varepsilon},{\xi}
 {\lambda}^{\varepsilon})) 
\end{equation*}
and thus
\begin{equation}\label{locprep}
 \wt p(t,w) = b(t,w)({\tau} - r(t,x,{\xi}))  \qquad |w| \ls
 {\lambda}^{-{\varepsilon}} \quad  t  \in I  
\end{equation}
where $0 \ne b \in S(1, g_{\varepsilon})$, $r(t,x,{\xi}) =
{\lambda^{-\varepsilon}}s(t {\lambda}^{\varepsilon},x
{\lambda}^{\varepsilon},{\xi}{\lambda}^{\varepsilon}) \in
S({\lambda^{-\varepsilon}}, 
g_{\varepsilon})$ when $  |w| \ls
{\lambda}^{-{\varepsilon}} $, and $ t  \in I $, $b = 1$ and $r = |\nabla r| = 0$
on ${\Gamma}$.  
By condition~\eqref{cond0} we also 
find that 
\begin{equation}\label{cond000}
 \left|d \nabla r\restr L \right| \le C \qquad \text{when $w = 0$ and
   $t \in I$}
\end{equation}
Recall that $\wt p = p/q$, where $q \in S(q, g_{\varepsilon})$ when $ |w| \ls \lambda^{-\varepsilon} $. 

In the following, we shall denote by ${\Gamma}$ the rays in $T^*X$
that goes through the bicharacteristic.
By homogeneity we obtain from~\eqref{locprep} that
\begin{equation*}
 b^{-1}q^{-1} p(t,x;{\tau},{\xi}) = {\tau} - r(t,x,{\xi}) 
\end{equation*}
where $b^{-1} \in S^0_{1-{\varepsilon},{\varepsilon}}$, $q^{-1} \in
S^{\varepsilon}_{1-{\varepsilon},{\varepsilon}}$ and ${\tau} - r \in
S^{1-{\varepsilon}}_{1-{\varepsilon},{\varepsilon}}$ 
when $ |\xi| \gs \lambda $ and the homogeneous distance $d$ to
${\Gamma}$ is less than $c|{\xi}|^{-\varepsilon}$ for some $c > 0$. 

Next we take a cut-off
function ${\chi} \in S^0_{1-{\varepsilon},{\varepsilon}}$ supported
where $d \ls 
|{\xi}|^{-\varepsilon}$ and $  |\xi | \gs \lambda  $ so that $b \ge c_0 > 0$ in $\supp \chi$ and ${\chi} = 1$ when 
$  |\xi | \ge c \lambda  $ and $d \le c 
|{\xi}|^{-\varepsilon}$ for any chosen $c > 0$. We let $B =
{\chi}b^{-1}q^{-1} \in \se {\varepsilon}$ and compose
the pseudodifferential operator $B$ with $P^*$. Since $P^* 
\in {\Psi}^1_{1,0}$ we obtain an asymptotic expansion of $BP^*$ in
$S^{1 + \varepsilon
  -j(1-{\varepsilon})}_{1-{\varepsilon},{\varepsilon}}$ for $j= 0$, 1,
$2, \dots$ when $d \ls
|{\xi}|^{-\varepsilon}$. But actually the symbol is in a better class.
The principal symbol is 
\begin{equation*}
({\tau} - r(t,x,{\xi})){\chi} \in
S^{1-{\varepsilon}}_{1-{\varepsilon},{\varepsilon}}  \qquad 
  d \ls|{\xi}| ^{-{\varepsilon}} 
\end{equation*}
and the calculus gives that the subprincipal symbol is equal to
\begin{equation}\label{sub}
\frac{i}{2}H_p({\chi}b^{-1}q^{-1}) + {\chi}b^{-1}q^{-1}p_0
\end{equation}
where  $p_0$ is the subprincipal symbol of $P^*$. As before, we shall
use homogeneous coordinates when $|{\xi}| \gs {\lambda}$.
Then Proposition~\ref{wtpest} gives $p = q\wt p \in
S(q{\lambda}^{1-{\varepsilon}}, g_{\varepsilon})$ 
and since ${\chi}b^{-1}q^{-1} \in S(q^{-1}, g_{\varepsilon})$ we find
that~\eqref{sub}  
is in $S({\lambda}^{\varepsilon}, g_{\varepsilon})$ when $d \ls
{\lambda}^{-{\varepsilon}}$ and $|{\xi}| \gs {\lambda}$. The value
of $H_p$ at  ${\Gamma}$ is equal to $q\partial_t$ so the value
of~\eqref{sub} is equal to
\begin{equation}
\frac{1}{2i}\partial_t q/q + p_0/q = \frac{D_t
|\nabla_h p| }{2|\nabla_h p|} + \frac{p_0}{|\nabla_h p|} \qquad \text{ when $  |\xi| \gs \lambda  $ at
${\Gamma}$}
\end{equation}
where $|\nabla_h p| = \sqrt{|\partial_x p|^2/|{\xi}|^2 + |\partial_{\xi}
  p|^2}$ is the homogeneous gradient, and the error of this approximation
is bounded by ${\lambda}^{2{\varepsilon}}$ times 
the homogeneous distance $d$ to ${\Gamma}$. In fact, since $\partial p
\in S(q ,g_{\varepsilon}) $ 
by Proposition~\ref{wtpest}, we 
have $H_p q^{-1} \in S({\lambda}^{{\varepsilon}}, g_{\varepsilon})$
and $p_0/q \in S(q^{-1}, g_{\varepsilon})$. Observe that $p_0/|\nabla_h
p|$ is equal to the normalized subprincipal symbol of $P^*$ on $S^*X$.
This preparation can only be done when $  |\xi| \gs \lambda  $
and the homogeneous distance  to
${\Gamma}$ is less than $c|{\xi}|^{-\varepsilon}$,
but we have to estimate the error terms.

\begin{defn}
For $ \varepsilon < 1/2 $ and $R \in {\Psi}^{\mu}_{{\varrho},{\delta}}$,  where
${\varrho} >{\varepsilon}$ and ${\delta} < 1 - {\varepsilon}$, we say that $T^*X \ni
(x_0,{\xi}_0) \notin \wf_{\varepsilon} (R)$ if the 
symbol of $R$ is $\Cal O(|{\xi}|^{-N})$, $\forall\, N$, when
the homogeneous distance to the ray $\set{(x_0, {\varrho}{\xi}_0):\
  {\varrho} \in \br_+}$ is less than $c|{\xi}|^{-{\varepsilon}}$ for 
some $c > 0$.
\end{defn}

By the calculus, this means that there exists $A\in
{\Psi}^{0}_{1-{\varepsilon},{\varepsilon}}$ so that $A \ge c > 0$ in a
neighborhood of the ray such that $AR \in {\Psi}^{-N}$ for any $N$.
This neighborhood is in fact the points with fixed bounded homogeneous distance
with respect to the metric $g_{\varepsilon}$ when ${\lambda} \cong |{\xi}|$.
It also follows from the calculus that this definition is invariant
under composition with classical pseudodifferential operators and conjugation
with elliptic homogeneous Fourier integral operators 
since the conjugated symbol is given by an asymptotic expansion by the
conditions on ${\varrho}$ and ${\delta}$. 
We also have that $\wf_{\varepsilon}(R) \subset \wf (R)$ when  $R \in
{\Psi}^{\mu}_{{\varrho},{\delta}}$.

Now we can use the Malgrange division theorem in
order to make the lower order terms independent on ${\tau}$ when $d \ls
|\xi|^{-{\varepsilon}} \ls \lambda^{-{\varepsilon}} $, starting with  
the subprincipal symbol  $\wt p_0 \in
S^{{\varepsilon}}_{1-{\varepsilon},{\varepsilon}}$ of $BP^*$ given
by~\eqref{sub}. Then
rescaling as before so that $Q_0(t,w) = 
{\lambda^{-\varepsilon}}\wt p_0(t{\lambda^{-\varepsilon}},
w{\lambda^{-\varepsilon}})$ we obtain that  
\begin{equation*}
 Q_0(t,w) = \wt c(t,w)({\tau}
 -s(t,x,{\xi})) + \wt q_0(t,x,{\xi}) \qquad |w| \ls 1 \quad t \in
 I_{\varepsilon}
\end{equation*}
where $s$ is given by~\eqref{sdef}, and $\wt c$ and $\wt q_0$ are
uniformly in $C^\infty$. This can be 
done locally and by a partition of unity for $t\in I_\varepsilon$. 
We find in the original coordinates that 
\begin{equation}\label{subpreploc}
\wt p_0(t,w) = c(t,w) (  {\tau} - r(t,x,{\xi}))
+ q_0(t,x,{\xi}) \qquad d \ls {\lambda}^{-{\varepsilon}}  \quad t \in I 
\end{equation}
where
$q_0(t,w)  = {\lambda^{\varepsilon}}\wt q_0(t{\lambda^{\varepsilon}},
w{\lambda^{\varepsilon}}) \in S(\lambda^\varepsilon, g_\varepsilon)$ and $c(t,w)  =
 {\lambda}^{2\varepsilon}\wt c(t{\lambda^{\varepsilon}},
 w{\lambda^{\varepsilon}})  \in S(\lambda^{2\varepsilon}, g_\varepsilon)$. 
By using a uniform partition of unity in
${\Psi}^0_{1-{\varepsilon},{\varepsilon}}$, we obtain~\eqref{subpreploc}
uniformly
when $  |\xi| \gs \lambda  $ and the homogeneous distance to
${\Gamma}$ is  $\ls |{\xi}|^{-\varepsilon}$ with $c \in 
S^{2{\varepsilon}-1}_{1-{\varepsilon},{\varepsilon}}$ by homogeneity, $q_0 \in
S^{\varepsilon}_{1-{\varepsilon},{\varepsilon}} $ 
and  $q_0 = \wt p_0$ at ${\Gamma}$. 
Now the composition of the operators having symbols~
$c$ and~${\tau}-r$ gives error terms that are in
$S^{3{\varepsilon}-1}_{1-{\varepsilon},{\varepsilon}}$ when $  |\xi| \gs \lambda  $. 
Thus if ${\varepsilon} < 1/3$ then by multiplication with an
pseudodifferential operator with symbol $1-c$ we can make the
subprincipal symbol independent of~${\tau}$. By iterating this
procedure we can successively make any lower order terms
independent of ${\tau}$ when $  |\xi| \gs \lambda  $ and 
the homogeneous distance $d$ to
${\Gamma}$ is less than $c|{\xi}|^{-\varepsilon}$. 
By applying the cut-off function ${\chi}$ we
obtain the following result.  

\begin{prop} \label{prepprop}
By conjugating with an elliptic
homogeneous Fourier integral operator we may obtain that\/
${\Gamma}$ is given by~\eqref{gammaco}. If\/  $0 < {\varepsilon} < 1/3$
then for any $ c > 0 $ we may multiply with an homogeneous elliptic operator $B \in 
{\Psi}^{{\varepsilon}}_{1-{\varepsilon},{\varepsilon}}$
to obtain that $B P^* = Q + R \in
{\Psi}^{1-{\varepsilon}}_{1-{\varepsilon},{\varepsilon}}$ where
${\Gamma} \bigcap \wf_{\varepsilon} (R) = \emptyset$ and the
symbol of $Q$ is equal to  
\begin{equation}\label{prepsymb}
 {\tau} - r(t,x,{\xi}) + q_0(t,x,{\xi}) +  r_0(t,x,{\xi}) \qquad
 \text{when $d \le c|{\xi}| ^{-{\varepsilon}}$, $  |\xi| \ge c\lambda  $  and $t \in I$}
\end{equation}
Here $r \in
S^{1-{\varepsilon}}_{1-{\varepsilon},{\varepsilon}}$, $q_0 \in 
S^{{\varepsilon}}_{1-{\varepsilon},{\varepsilon}}$ and  $r_0 \in
S^{3{\varepsilon} - 
  1}_{1-{\varepsilon},{\varepsilon}}$, $r = |\nabla r| = 0$ on
${\Gamma}$, and $q_0$ is equal to
\begin{equation}\label{symb}
\frac{D_t |\nabla_h p(t,0)|}{2|\nabla_h p(t,0)|} +
\frac{{p_0}(t,0)}{|\nabla_h p(t,0)|}  + \Cal
O({\lambda}^{2{\varepsilon}}d) \qquad 
 \text{when $d \le c|{\xi}| ^{-{\varepsilon}}$, $  |\xi| \ge c\lambda  $  and $t \in I$}
\end{equation}
where $d$ is the homogeneous distance 
to ${\Gamma}$ and  $|\nabla_h p| = \sqrt{|\partial_x p|^2/|{\xi}|^2 +
|\partial_{\xi} p|^2}$ is the homogeneous gradient. This preparation is uniform with respect 
to~ $ \lambda \ge 1$.
\end{prop}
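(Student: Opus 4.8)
The plan is to collect and package the computations already carried out in this section. First I would apply the elliptic homogeneous Fourier integral operator constructed after~\eqref{qdef}, which completes $\tau = \varrho$ to a uniformly bounded homogeneous symplectic coordinate system, to arrange that $\Gamma$ is given by~\eqref{gammaco} and that $\wt p \cong \tau$ modulo second-order terms along $\Gamma$; by the invariance of~\eqref{cond0} this also gives~\eqref{cond000}. Then, using Proposition~\ref{wtpest}, $p = q\wt p \in S(q\lambda^{1-\varepsilon}, g_\varepsilon)$ on the set $|w| \ls \lambda^{-\varepsilon}$, and the Malgrange-preparation argument around~\eqref{locprep} yields $\wt p = b(\tau - r)$ there with $0 \ne b \in S(1,g_\varepsilon)$, $r \in S(\lambda^{-\varepsilon}, g_\varepsilon)$, and $r = |\nabla r| = 0$ on $\Gamma$. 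Setting $B_0 = \chi b^{-1}q^{-1} \in \se\varepsilon$ with the cut-off $\chi$ chosen as in the text (so $b \ge c_0 > 0$ on $\supp\chi$, and $\chi = 1$ when $|\xi| \ge c\lambda$, $d \le c|\xi|^{-\varepsilon}$), the calculus gives that $B_0 P^*$ has principal symbol $(\tau - r)\chi$ and subprincipal symbol~\eqref{sub}, whose restriction to $\Gamma$ is the first displayed term in~\eqref{symb} with error $\Cal O(\lambda^{2\varepsilon}d)$, as computed there.

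Next I would iterate the Malgrange \emph{division} step to remove the $\tau$-dependence of the lower-order symbols. Starting from $\wt p_0 \in S^\varepsilon_{1-\varepsilon,\varepsilon}$ given by~\eqref{sub}, the rescaling-and-division computation leading to~\eqref{subpreploc} writes $\wt p_0 = c(\tau - r) + q_0$ with $c \in S^{2\varepsilon-1}_{1-\varepsilon,\varepsilon}$, $q_0 \in S^\varepsilon_{1-\varepsilon,\varepsilon}$, and $q_0 = \wt p_0$ on $\Gamma$; multiplying by the operator with symbol $1 - c$ (legitimate since $c$ is small of order $\lambda^{2\varepsilon-1}$, so $1-c$ is elliptic in the relevant class when $\varepsilon < 1/3$) makes the subprincipal symbol $\tau$-independent, and the composition error is pushed into $S^{3\varepsilon-1}_{1-\varepsilon,\varepsilon}$. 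Repeating on each successively lower-order term makes all the lower-order symbols independent of $\tau$, at the cost of a remainder $r_0 \in S^{3\varepsilon-1}_{1-\varepsilon,\varepsilon}$; applying the cut-off $\chi$ one last time confines everything to the region where $d \le c|\xi|^{-\varepsilon}$ and $|\xi| \ge c\lambda$. The resulting product $B$ of all these elliptic factors lies in ${\Psi}^\varepsilon_{1-\varepsilon,\varepsilon}$, and $BP^* = Q + R$ with $Q$ having symbol~\eqref{prepsymb} and $R$ supported, modulo smoothing, away from $\Gamma$, i.e.\ $\Gamma \cap \wf_\varepsilon(R) = \emptyset$.

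Throughout, the point requiring care — and the main obstacle — is \emph{uniformity in $\lambda$} (equivalently in $j$). Each of the three ingredients (the symplectic straightening via a Fourier integral operator, the Malgrange preparation giving~\eqref{locprep}, and the iterated Malgrange division giving~\eqref{subpreploc}) is a priori only local in $t$, and must be patched by a uniformly bounded partition of unity on $I_\varepsilon$, with all constants controlled independently of the bicharacteristic. This is exactly where condition~\eqref{cond0} enters: it bounds $d\nabla\wt p\restr{L}$ at $\Gamma$ uniformly, hence via Proposition~\ref{wtpest} makes $q$ a weight and $\wt p \in S(\lambda^{1-\varepsilon}, g_\varepsilon)$ with uniform seminorms on the $\lambda^{-\varepsilon}$-neighborhood, so that the rescaled functions $Q$, $Q_0$, $\wt c$, $\wt q_0$ all lie in $C^\infty$ with $\lambda$-independent bounds and the preparation/division theorems apply with uniform constants. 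Finally I would note that the requirement $\varepsilon < 1/3$ is precisely what keeps the composition-error class $S^{3\varepsilon-1}_{1-\varepsilon,\varepsilon}$ (and hence $r_0$) of negative order, so that the iteration converges and $R$ can be absorbed.
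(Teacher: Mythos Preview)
Your proposal is correct and follows the paper's approach exactly: the proposition is simply the packaging of the computations that precede it in Section~\ref{norm}, and you have identified all the steps (FIO straightening to~\eqref{gammaco}, Malgrange preparation~\eqref{locprep} after rescaling, composition with $B_0 = \chi b^{-1}q^{-1}$, computation of the subprincipal term~\eqref{sub}, then iterated Malgrange division~\eqref{subpreploc} with multiplication by $1-c$) in the right order and with the right symbol classes. One small correction on the uniformity discussion: Proposition~\ref{wtpest} and the preparation~\eqref{locprep} do not actually use condition~\eqref{cond0}; they rely only on $|\nabla p| \ge \kappa = \lambda^{-\varepsilon}$ along $\Gamma$ together with the ordinary boundedness of $\partial^2 p$, while~\eqref{cond0} is invoked here only to record~\eqref{cond000}, which is used later in Proposition~\ref{symplprop} and Section~\ref{eik}.
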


Observe that the integration of the term $D_t |\nabla_h
p(t,0)|/2 |\nabla_h p(t,0)|$ in~\eqref{symb} will give terms that are
$\Cal O(\log (|\nabla_h p(t,0)|)) = \Cal O(|\log ({\lambda})| + 1)$, which do
not affect condition~\eqref{cond2}. We find from
Proposition~\ref{prepprop} that $\wt p(t,x;{\tau},{\xi}) \cong
{\tau} - r(t,x,{\xi})$ modulo terms vanishing of third order at~${\Gamma}$
since $ r $ vanishes of second order at~$ \Gamma $.

Recall that $L$ is a smooth section of Lagrangean spaces 
$L(w) \subset T_w{\Sigma} \subset T_w(T^*\br^{n})$, $w \in {\Gamma}$, such that
the linearization of the Hamilton vector field $H_p$ is in $TL$ at
${\Gamma}$. 
By Proposition~\ref{prepprop} we may assume that ${\Gamma}
= \set{(t,0;0,{\xi}_0):\ t \in I}$, $0 \in I$,
and $\wt p(t,x;{\tau},{\xi}) = {\tau} - r(t,x,{\xi})$
modulo terms vanishing of third order at ${\Gamma}$. Then we may
parametrize $L(t) = L(w)$ where $w  
= (t,0,{\xi}_0)$ for $t \in I$. Now since $T^*\br^n$ is a linear space, we may
identify the fiber of $T_w(T^*\br^n)$ with $T^*\br^n$. Since $L(w) \subset
T_w\Sigma$ and $w \in {\Gamma}$  
we find that ${\tau} = 0$ in $L(w)$. Since $L(w)$ is Lagrangean, we 
find that $t$ lines are in $L(w)$. By choosing symplectic coordinates in
$(x,{\xi})$ we obtain that $L(0) =  \set{(s,y;0,0):\
  {(s,y)} \in \br^{n}}$, then by condition~\eqref{cond0} we find that 
$\partial_x^2 r(0,0,{\xi}_0)$ is uniformly bounded. 
Since ${\tau} = 0$ on $L(t)$ and $L(t)$ is Lagrangean 
we find for small $t$ by continuity that
\begin{equation}\label{Lex}
 L(t) = \set{(s,y;0,A(t)y):\  {(s,y)} \in
  \br^{n}} 
\end{equation}
where $A(t)$ is real, continuous and symmetric for $t \in I$ and $A(0) =
0$. Since the linearization of the Hamilton vector field $H_p$ at
${\Gamma}$ is tangent to $L$, we find that $L$ is
parallel under the flow of that linearization. 
Since $L(t)$ is Lagrangean it is
only the restriction of the second order jet of $r(t,w)$ to $L(t)$ that 
determines the evolution of $t \mapsto L(t)$. For~\eqref{Lex} this
restriction is given by the second order Taylor expansion of
\begin{equation*}
 R(t,x) = r(t,x,{\xi}_0 + A(t)x)
\end{equation*}
thus $\partial_x^2R(t,0)$ is uniformly bounded by condition~\eqref{cond0}.
The linearized Hamilton vector field is
\begin{multline*}
\partial_t +  \w{\partial_x^2R(t,0)x,\partial_{\xi}} \\= \partial_t +
 \w{\left(\partial_x^2r(t,0,{\xi}_0) + 2 \re (\partial_x\partial_{\xi}r(t,0,{\xi}_0)A)   +
 A\partial_{\xi}^2r(t,0,{\xi}_0)A\right)x,\partial_{\xi}}  
\end{multline*}
where $\re B = (B + {}B^{t})/2 $ is the symmetric part of $B$. Applying this on
${\xi} - A(t) x$, which vanishes identically on $L(t)$ for $t \in I$, we obtain that
\begin{equation*}
 -A'(t) + \partial_x^2r(t,0,{\xi}_0) + 2 \re
 (\partial_x\partial_{\xi}r(t,0,{\xi}_0)A(t)) + 
 A(t)\partial_{\xi}^2r(t,0,{\xi}_0)A(t) =0
\end{equation*}
which gives the evolution of $L(t)$. The equation
\begin{equation}\label{eveq}
 A'(t) = \partial_x^2r(t,0,{\xi}_0) + 2 \re
 (\partial_x\partial_{\xi}r(t,0,{\xi}_0)A(t)) + 
 A(t)\partial_{\xi}^2r(t,0,{\xi}_0)A(t) \qquad A(0)= 0
\end{equation}
is locally uniquely solvable and the right-hand side is uniformly
bounded as long as $A$ is bounded. Observe that  by uniqueness, $A(t) \equiv 0$
if and only if $\partial_x^2r(t,0,{\xi}_0) \equiv 0$, $\forall\,t$
(see also Example~\ref{grazex}).
But since~\eqref{eveq} is non-linear, the solution
could become unbounded if $\partial_x^2r \ne 0$ and $\partial_{\xi}^2r \ne 0$
so that $\mn {A(s)} \to \infty$ as $s \to t_1 \in
I$. This means that the angle between $L(t) = \set{(s,y;0,A(t)y):\ (s,y) \in
  \br^{n}}$ and the vertical space $\set{(s,0;0,{\eta}): (s,{\eta}) \in
  \br^{n}}$ goes to zero, but this is just a coordinate
singularity.

In general, since we identify the fiber of $T_w(T^*\br^n)$ with $T^*\br^n$ we may
define $R(t,x,{\xi})$ for each~$t$ so that
\begin{equation}\label{defR}
R(t,x,{\xi}) = r(t,x,{\xi}_0 + {\xi}) \quad \text{when $(0,x;0,{\xi}) \in L(t)$} 
\end{equation}
Then $R = r$ on $L$ and we find  that 
\begin{equation}\label{newtau}
 {\tau} - \w{R(t)z,z}/2 \in C^\infty 
\end{equation}
if $z= (x,{\xi})$ and $R(t) = \partial^2_{z} R(t,0,0)\restr L(t)$. 
Observe that we find from~\eqref{cond0} that~\eqref{newtau} 
is uniformly in $ C^\infty $ in~$ z $.
We find that $R(0) = \partial_x^2r(t,0, {\xi}_0)$ and in general
$R(t)$ is given by the right hand side of~\eqref{eveq}.
Now we
can complete $t$ and~\eqref{newtau} to a uniform homogeneous symplectic
coordinates system so that 
$
 {\Gamma} = \set{(t,0,{\xi}_0):\ t \in I}
$ and $
L(0) = \set{(s,y;0,0):\ {(s,y)} \in
  \br^{n}}.
$
In fact, we may let $x$ and ${\xi}$ have the same values
when $t = 0$ and clearly $H_{\tau}$ is is not changed on~${\Gamma}$
since then $z=0$. 
This is a uniformly bounded linear symplectic transformation in $(x,{\xi})$
which is uniformly $ C^1 $ in $ t $. It is given by a 
uniformly bounded elliptic Fourier integral operator $ F(t) $ which has uniformly
bounded $ t $ derivative.  We will call this type of Fourier
integral operator a {\em $C^1$ section of Fourier integral operators}. 
This will give uniformly bounded terms when we conjugate a
first order differential operator in $t$ with $ F(t) $, for example
the normal form which has symbol given by~\eqref{prepsymb}.
For $ t $ close to $ 0 $ the section $ F(t) $ is given by multiplication with $ e^{\w {A(t)x,x}/2} $,
where $ A(t) $ solves~\eqref{eveq}. For general $ t $ we can put $ F(t) $ on this form after a linear
symplectic transformation in ~$(x,{\xi})$.
Observe that $ F(t) $ is continuous on local $ L^2 $ Sobolev spaces in $ x $, 
uniformly in ~$ t $, since it is continuous with respect to the norm $ \mn{(1 + |x|^2 + |D|^2)^k u} $, $ \forall\ k $.
In fact, it suffices to check this for the generators of the group of Fourier integral operators corresponding
to linear symplectic transformations, which are given by the partial Fourier transforms, linear transformations in ~$ x $ 
and multiplication with $ e^{i\w{Ax,x}} $ where~$ A $ is real and symmetric.

We find in the new coordinates
that $p = {\tau} - r_1$, where $r_1(t,x,{\xi})$ is independent
of~${\tau}$ and satisfies $ \partial^2_{z}r_1(t,0,0)\restr L(t)
\equiv 0$. This follows since 
$$
 p(t,x;{\tau},{\xi}) = {\tau} - \w{R(t)z,z}/2 - r_1(t,x,{\tau},{\xi})
$$ 
where $\partial_{\tau} r_1 = -
\set{t, r_1} = -\set{t,r} \equiv 0$ is invariant under changes of symplectic
coordinates. Similarly we find that the lower order terms
$p_j(t,x,{\xi})$ are independent of~${\tau}$ for~$j \le 0$. Since the evolution of
$L$ is determined by the second 
order derivatives of the principal symbol along $L$ by
Example~\ref{grazex}, we find that $L(t) \equiv \set{(t,x;0,0):\ {(t,x)} \in
  \br^{n}}$. Changing notation so that $r = r_1$ and
$p(t,x;{\tau},{\xi}) = {\tau} - r(t,x,{\xi})$ we obtain the following result.

\begin{prop}\label{symplprop}
By conjugation with a uniformly bounded $ C^1 $ section of elliptic 
Fourier integral operators, corresponding to  linear symplectic transformations in~$ (x,\xi) $,
we may assume that the symplectic coordinates
are chosen so that the grazing Lagrangean space $L(w) \equiv \set
{(t,x,0,0):\ (t,x) \in \br^{n}}$, $\forall\, w \in {\Gamma}$, which
implies that $\partial_x^2 r(t,0) \equiv 0$.
\end{prop}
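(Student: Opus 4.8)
The plan is essentially to collect the facts established in the discussion preceding the statement and to check that the $t$‑dependent coordinate change we perform is implemented by an admissible family of Fourier integral operators. So first I would start from the normal form of Proposition~\ref{prepprop}, in which $\Gamma = \set{(t,0;0,\xi_0):\ t \in I}$ and $\wt p(t,x;\tau,\xi) = \tau - r(t,x,\xi)$ modulo terms vanishing of third order at $\Gamma$, with $r = |\nabla r| = 0$ on $\Gamma$ and the uniform bound \eqref{cond000} on $d\nabla r\restr L$. Since $L(w) \subset T_w\st$ and $w \in \Gamma$ we have $\tau \equiv 0$ on $L(w)$, and since $L(w)$ is Lagrangean the $t$‑direction lies in $L(w)$; after a linear symplectic change in $(x,\xi)$ at $t = 0$ we may assume $L(0) = \set{(s,y;0,0):\ (s,y) \in \br^n}$, hence by continuity $L(t) = \set{(s,y;0,A(t)y):\ (s,y) \in \br^n}$ for small $t$ with $A(t)$ real, symmetric, continuous and $A(0)=0$. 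The requirement that the linearization of $H_p$ be tangent to $L$ then yields the evolution equation \eqref{eveq} for $A$, locally uniquely solvable with right‑hand side uniformly bounded as long as $A$ is, by \eqref{cond000} (cf.\ Example~\ref{grazex}).

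Next I would deal with the possibility that $A(t)$ blows up as $t$ reaches some $t_1 \in I$: this is only the coordinate singularity corresponding to the angle between $L(t)$ and the vertical space tending to zero. To bypass it I would pass to the invariant description, identifying the fibre of $T_w(T^*\br^n)$ with $T^*\br^n$ and defining $R(t,x,\xi)$ by \eqref{defR} so that $R = r$ on $L$, with $R(t) = \partial_z^2 R(t,0,0)\restr{L(t)}$; by \eqref{cond0} the function $\tau - \w{R(t)z,z}/2$ is uniformly in $C^\infty$ in $z = (x,\xi)$, cf.\ \eqref{newtau}, with $R(0) = \partial_x^2 r(t,0,\xi_0)$ and in general $R(t)$ equal to the right‑hand side of \eqref{eveq}. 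I would then complete $t$ together with the function in \eqref{newtau} to a uniform homogeneous symplectic coordinate system in which $\Gamma = \set{(t,0,\xi_0):\ t \in I}$ and $L(0) = \set{(s,y;0,0):\ (s,y)\in\br^n}$, letting $x$ and $\xi$ keep their $t=0$ values; since $H_\tau$ is unchanged on $\Gamma$ this is a uniformly bounded linear symplectic transformation in $(x,\xi)$ which is uniformly $C^1$ in $t$, hence realized by a uniformly bounded elliptic Fourier integral operator $F(t)$ with uniformly bounded $t$‑derivative — a $C^1$ section of Fourier integral operators — given for $t$ near $0$ by multiplication with $e^{\w{A(t)x,x}/2}$ and, for general $t$, of the same form after a fixed linear symplectic transformation in $(x,\xi)$. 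Because $F(t)$ is continuous on the local $L^2$ Sobolev spaces in $x$ uniformly in $t$ (it suffices to check this on the generators of the group: partial Fourier transforms, linear transformations in $x$, and multiplication with $e^{i\w{Ax,x}}$), conjugating the first order operator in $t$ from Proposition~\ref{prepprop} with $F(t)$ only produces uniformly bounded extra terms.

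Finally I would read off the conclusion in the new coordinates. Writing $p(t,x;\tau,\xi) = \tau - \w{R(t)z,z}/2 - r_1(t,x,\tau,\xi)$, the identity $\partial_\tau r_1 = -\set{t,r_1} = -\set{t,r} \equiv 0$ is invariant under symplectic changes, so $r_1$ — and likewise the lower order terms $p_j$ for $j \le 0$ — are independent of $\tau$, and $\partial_z^2 r_1(t,0,0)\restr{L(t)} \equiv 0$. Since by Example~\ref{grazex} the evolution of $L$ is governed by the second order derivatives of the principal symbol along $L$, which now vanish, uniqueness forces $L(t) \equiv \set{(t,x;0,0):\ (t,x) \in \br^n}$, i.e.\ $A \equiv 0$, and hence $\partial_x^2 r_1(t,0) \equiv 0$. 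Renaming $r_1$ as $r$ gives the asserted normal form $p(t,x;\tau,\xi) = \tau - r(t,x,\xi)$ with $\partial_x^2 r(t,0) \equiv 0$. The main obstacle is not any single computation but the bookkeeping of the previous paragraph: verifying that the $t$‑dependent symplectic change really is a uniformly bounded $C^1$ section of elliptic Fourier integral operators and that its conjugation preserves all the uniform symbol estimates of Proposition~\ref{prepprop} while leaving $H_\tau\restr\Gamma$ and the vanishing orders of $r$ at $\Gamma$ intact.
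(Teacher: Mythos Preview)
Your proposal is correct and follows essentially the same route as the paper: the proposition there is stated as a summary of the preceding discussion, and you have faithfully reproduced that discussion---parametrizing $L(t)$ by the symmetric matrix $A(t)$ solving~\eqref{eveq}, passing to the invariant description via $R(t,\cdot)$ and~\eqref{newtau} to handle the coordinate singularity, implementing the resulting $t$--dependent linear symplectic change by a uniformly bounded $C^1$ section $F(t)$ of Fourier integral operators, and then reading off $\partial_\tau r_1 \equiv 0$ and $\partial_z^2 r_1\restr{L(t)} \equiv 0$ to conclude $L(t)\equiv\set{(t,x;0,0)}$ and $\partial_x^2 r(t,0)\equiv 0$. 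One small slip: you write $L(w)\subset T_w\st$, but it should be $L(w)\subset T_w\Sigma$ (the full characteristic set, not the double characteristics $\st$); this does not affect the argument since the only consequence used is $\tau=0$ on $L(w)$.
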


We shall apply the adjoint $P^*$ of the
operator on the form in Proposition~\ref{prepprop} on approximate
solutions on the form 
\begin{equation}\label{udef}
 u_{\lambda}(t,x) = \exp(i{\lambda}(\w{x,{\xi}_0} + {\omega}(t,x)))
 \sum_{j=0}^{M} {\varphi}_j (t, x){\lambda}^{-j{\varrho}} 
\end{equation}
where $ |\xi_0|  = 1 $ and the phase function ${\omega}(t,\cdot) \in
S({\lambda}^{-7{\varepsilon}}, g_{3\varepsilon})$ is real valued such
that  $\partial_x^2 {\omega}(t,0) \equiv 0$ and
${\varphi}_j(t,x) \in S(1,g_{\delta})$ has support where $|x| 
\ls {\lambda}^{-{\delta}}$. 
Here ${\delta}$, ${\varepsilon}$ and  
${\varrho}$ are positive constants to be determined later. 
The phase function ${\omega}(t,x)$ will be constructed
in Section~\ref{eik}, see Proposition~\ref{omegalem}.
Observe that we have assumed
that ${\varepsilon} < 1/3$ in Proposition~\ref{prepprop}, but we shall
impose further restrictions on~${\varepsilon}$ later on.
We shall assume that  ${\varepsilon} + {\delta} < 1$, then if
$p(t,x,{\xi}) \in {\Psi}^{1-{\varepsilon}}_{1-{\varepsilon},{\varepsilon}}$  
we obtain the formal expansion (see~\cite[Chapter VI, Theorem 3.1]{T2})
\begin{multline}\label{trevesexp}
  p(t,x,D_x)  (\exp(i{\lambda}(\w{x,{\xi}_0} + {\omega}(t,x))){\varphi}(t,x))
  \\ \sim \exp(i{\lambda}(\w{x,{\xi}_0} +{\omega}(t,x))) \sum_{{\alpha}} 
   \partial_{{\xi}}^{\alpha }
  p(t,x,{\lambda}({\xi}_0 + \partial_x{\omega}(t,x)))\Cal 
  R_{\alpha}({\omega},{\lambda},D){\varphi}(t,x)/{\alpha}!
\end{multline}
where $\Cal
R_{\alpha}({\omega},{\lambda},D){\varphi}(t,x) =  
D_y^{\alpha}(\exp(i{\lambda} \wt
{\omega}(t,x,y)){\varphi}(t,y))\restr{y=x}$ 
with
$$
\wt {\omega}(t,x,y) = {\omega}(t,y) - {\omega}(t,x) +
(x-y)\partial_x {\omega}(t,x)
$$
Observe that if $ |\partial_x\omega| \ll 1 $ then this only involves the values of 
$ p(t,x,\xi) $ where $ |\xi| \ge c\lambda $ for some $c > 0$.
Using this expansion we find that 
\begin{multline}\label{exp}
 P^*(t,x,D)  (\exp(i{\lambda}(\w{x,{\xi}_0} +
 {\omega}(t,x))){\varphi}(t,x)) \\  
\sim \exp(i{\lambda}(\w{x,{\xi}_0} + {\omega}(t,x)))\big({\lambda}\partial_t
{\omega}(t,x) - r(t,x,{\lambda}({\xi}_0 + 
 \partial_x{\omega})){\varphi}(t,x) \\ + D_t {\varphi}(t,x) -
 \sum_{j}\partial_{{\xi}_j}r(t,x,{\lambda}({\xi}_0 + 
 \partial_x{\omega}))D_{x_j}{\varphi}(t,x)  + q_0(t,x,{\lambda}({\xi}_0 +
\partial_x{\omega})){\varphi}(t,x)
\\ +  \sum_{jk}\partial_{{\xi}_j}\partial_{{\xi}_k}r(t,x,{\lambda}({\xi}_0 +
 \partial_x{\omega}))( D_{x_j}D_{x_k}{\varphi}(t,x)
 + i{\lambda}{\varphi}(t,x) D_{x_j}D_{x_k}{\omega}(t,x))/2  +\dots\big)  
\end{multline}
which gives an expansion in
$S({\lambda}^{1- {\varepsilon} - j(1- {\delta}- {\varepsilon})}, 
g_{{\delta}})$, 
$j \ge 0$, if ${\delta} +
{\varepsilon} < 1$ and ${\varepsilon} \le 1/4$. 
In fact, since $|{\xi}| \cong {\lambda}$ every
${\xi}$ derivative on terms in
$S^{1-{\varepsilon}}_{1-{\varepsilon},{\varepsilon}}$ gives
a factor that is $\Cal O({\lambda}^{{\varepsilon}-1})$ and every $x$ derivative
of ${\varphi}$ gives a factor that is $\Cal
O({\lambda}^{{\delta}})$. A factor ${\lambda} D_x^{\alpha}{\omega}$
requires $|{\alpha}| $ number of ${\xi}$ derivatives of a term in
the expansion of 
$P^*$, which gives a factor that is $\Cal
O({\lambda}^{(2 - |{\alpha}|)(1-4{\varepsilon})})$. 
Similarly, the expansion coming from terms in $P^*$ that have symbols in
$S^{\varepsilon}_{1-{\varepsilon},{\varepsilon}}$ gives an expansion
in $S^{\varepsilon - j(1 - {\delta} -
  {\varepsilon})}_{1-{\varepsilon},{\varepsilon}}$, $j \ge 0$.
Thus, if ${\delta} + {\varepsilon} < 2/3$ and ${\varepsilon} \le 1/4$
then the terms in the expansion have negative  
powers of ${\lambda}$ except the terms in~\eqref{exp}, and for the
last ones we find that 
\begin{equation}\label{exp0}
 \sum_{jk}\partial_{{\xi}_j}\partial_{{\xi}_k}r(t,x,{\lambda}({\xi}_0
 + \partial_x{\omega}))(D_{x_j}D_{x_k}{\varphi} + i{\lambda}{\varphi}D_{x_j}D_{x_k}{\omega})
 = \Cal 
 O({\lambda}^{2{\delta} +
   {\varepsilon} -1} + {\lambda}^{3{\varepsilon} - {\delta}}) 
\end{equation}
In fact,  $\partial_{{\xi}_j}\partial_{{\xi}_k}r(t,x,{\lambda}({\xi}_0
+ \partial_x{\omega})) = \Cal O({\lambda}^{{\varepsilon}-1})$ and
 $D_{x_j}D_{x_k}{\omega} = \Cal O({\lambda}^{2{\varepsilon}}d)$
when ${\varphi} \ne 0$, since we have $D_{x_j}D_{x_k}{\omega} = 0$ when $x =
0$, and $d  = \Cal O({\lambda}^{-{\delta}})$ in $\supp {\varphi}$. 

The error terms in~\eqref{exp0} are of equal size if $2{\delta} +
{\varepsilon} -1 = 3{\varepsilon} -
{\delta}$, i.e., ${\delta} = (1+2
{\varepsilon})/3$. We then obtain 
$3 {\varepsilon} - {\delta} 
= (7{\varepsilon} -1)/3 < 0$ if
${\varepsilon} < 1/7$. Observe that in this case 
$1 - {\delta} - {\varepsilon} = (2 - 5{\varepsilon})/3$ and
${\delta}+ {\varepsilon} < 2/3$ since ${\varepsilon} < 1/5$.
We also have that $1 - 4{\varepsilon} > 1 - {\delta} - {\varepsilon}$
since ${\delta} > 3{\varepsilon}$.
Thus we obtain the following result.

\begin{prop}\label{expprop}
Assume that  ${\omega}(t,\cdot) \in
S({\lambda}^{-7{\varepsilon}}, g_{3\varepsilon})$ is real valued and
$\partial_x^2 {\omega}(t,0) \equiv 0$,
${\varphi}_j(t,x) \in S(1,g_{\delta})$ has support where $|x| 
\ls {\lambda}^{-{\delta}}$, for ${\delta}$, ${\varepsilon} > 0$.  If
${\delta} = (1+ 2{\varepsilon})/3$ and ${\varepsilon} < 1/7$,
then~\eqref{exp} has an expansion in $S({\lambda}^{1-
  {\varepsilon} - j(2 - 5{\varepsilon})/3}, 
g_{{\delta}})$, $j \ge 0$, and
is equal to 
\begin{multline}\label{exp1}
 \exp(-i{\lambda}(\w{x,{\xi}_0} + 
 {\omega}(t,x))) P^*(t,x,D) 
   (\exp(i{\lambda}(\w{x,{\xi}_0} +
     {\omega}(t,x))){\varphi}(t,x)) \\ \sim 
({\lambda}\partial_t {\omega}(t,x) -
 r(t,x, {\lambda}({\xi}_0 +
 \partial_x{\omega}))){\varphi}(t,x) \\ +  D_t {\varphi}(t,x) -
 \sum_{j}\partial_{{\xi}_j}r(t,x, {\lambda}({\xi}_0 +
 \partial_x{\omega})) D_{x_j}{\varphi}(t,x) + q_0(t,x, {\lambda}({\xi}_0 +
 \partial_x{\omega})){\varphi}(t,x)
\end{multline}
modulu terms that are $\Cal O({\lambda}^{(7{\varepsilon} -1)/3})$.
\end{prop}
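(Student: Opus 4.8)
The plan is to compute the symbolic expansion \eqref{trevesexp} for $P^*$ term by term, using the normal form from Proposition~\ref{prepprop}, and to track the size of each resulting term in the rescaled symbol classes $S({\lambda}^{\ast},g_{\delta})$ so as to discard everything with a negative power of ${\lambda}$ beyond the stated threshold. First I would recall that by Proposition~\ref{prepprop} the symbol of $BP^*$ equals $\tau - r + q_0 + r_0$ with $r \in S^{1-{\varepsilon}}_{1-{\varepsilon},{\varepsilon}}$, $q_0 \in S^{{\varepsilon}}_{1-{\varepsilon},{\varepsilon}}$, $r_0 \in S^{3{\varepsilon}-1}_{1-{\varepsilon},{\varepsilon}}$, all independent of $\tau$, microlocally near ${\Gamma}$ where $d \le c|{\xi}|^{-{\varepsilon}}$. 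Applying the operator to $\exp(i{\lambda}(\w{x,{\xi}_0}+{\omega}(t,x))){\varphi}(t,x)$ with ${\varphi}$ supported in $|x| \ls {\lambda}^{-{\delta}}$, the term $\tau$ produces ${\lambda}\partial_t{\omega}\cdot{\varphi} + D_t{\varphi}$, and since $|\partial_x{\omega}| \ll 1$ on the support we have $|{\xi}_0 + \partial_x{\omega}| \cong 1$, so only the values of $r, q_0, r_0$ at frequencies $|{\xi}| \cong {\lambda}$ enter; this justifies freezing the frequency argument at ${\lambda}({\xi}_0+\partial_x{\omega})$ in \eqref{exp}. The $r$-term contributes, via \eqref{trevesexp}, the summands $-r(t,x,{\lambda}({\xi}_0+\partial_x{\omega})){\varphi}$, $-\sum_j \partial_{{\xi}_j}r\, D_{x_j}{\varphi}$, $\tfrac12\sum_{jk}\partial_{{\xi}_j}\partial_{{\xi}_k}r\,(D_{x_j}D_{x_k}{\varphi} + i{\lambda}{\varphi}D_{x_j}D_{x_k}{\omega})$, plus higher ${\alpha}$; the $q_0$-term contributes $q_0(t,x,{\lambda}({\xi}_0+\partial_x{\omega})){\varphi}$ plus ${\xi}$-derivative corrections; and $r_0$ contributes only lower-order terms.

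Next I would do the bookkeeping of powers of ${\lambda}$. Each ${\xi}$-derivative falling on a symbol in $S^{1-{\varepsilon}}_{1-{\varepsilon},{\varepsilon}}$ costs ${\lambda}^{{\varepsilon}-1}$ (since $|{\xi}| \cong {\lambda}$ and the class is $S_{1-{\varepsilon},{\varepsilon}}$), each $x$-derivative on ${\varphi} \in S(1,g_{\delta})$ costs ${\lambda}^{{\delta}}$, and each factor ${\lambda}D_x^{\alpha}{\omega}$ with $|{\alpha}| \ge 2$ forces $|{\alpha}|$ many ${\xi}$-derivatives on the $P^*$-symbol, yielding ${\lambda}^{(2-|{\alpha}|)(1-4{\varepsilon})}$ after one observes ${\omega} \in S({\lambda}^{-7{\varepsilon}},g_{3{\varepsilon}})$ so $D_x^{\alpha}{\omega} = \Cal O({\lambda}^{-7{\varepsilon}+3{\varepsilon}|{\alpha}|})$ and one uses ${\delta} = (1+2{\varepsilon})/3$. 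This shows the $r$-expansion lives in $S({\lambda}^{1-{\varepsilon}-j(1-{\delta}-{\varepsilon})},g_{\delta})$ and the $q_0$-expansion in $S({\lambda}^{{\varepsilon}-j(1-{\delta}-{\varepsilon})},g_{\delta})$, with $1-{\delta}-{\varepsilon} = (2-5{\varepsilon})/3 > 0$. So apart from the four displayed terms of \eqref{exp1}, everything is either a genuinely lower-order term or one of the two second-order corrections in \eqref{exp0}. For those I would use that $\partial_{{\xi}_j}\partial_{{\xi}_k}r = \Cal O({\lambda}^{{\varepsilon}-1})$ and, crucially, $D_{x_j}D_{x_k}{\omega} = \Cal O({\lambda}^{2{\varepsilon}}d)$ — which uses the hypothesis $\partial_x^2{\omega}(t,0)\equiv 0$ so that $D^2_x{\omega}$ vanishes at $x=0$ — together with $d = \Cal O({\lambda}^{-{\delta}})$ on $\supp{\varphi}$; hence these corrections are $\Cal O({\lambda}^{2{\delta}+{\varepsilon}-1} + {\lambda}^{3{\varepsilon}-{\delta}})$. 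With ${\delta} = (1+2{\varepsilon})/3$ both exponents equal $(7{\varepsilon}-1)/3$, which is negative when ${\varepsilon} < 1/7$, and one checks $1-4{\varepsilon} > 1-{\delta}-{\varepsilon}$ (equivalent to ${\delta} > 3{\varepsilon}$) so the ${\omega}$-induced higher terms are even smaller. Collecting, \eqref{exp} reduces to \eqref{exp1} modulo $\Cal O({\lambda}^{(7{\varepsilon}-1)/3})$.

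The main obstacle I anticipate is not the algebra of \eqref{trevesexp} — that is the standard Treves-type oscillatory symbol expansion and can be cited — but rather the \emph{uniform} control of all error estimates in the rescaled classes $S(m,g_{\varepsilon})$: one must verify that freezing the frequency at ${\lambda}({\xi}_0+\partial_x{\omega})$, and then Taylor-expanding the remainder $\Cal R_{\alpha}({\omega},{\lambda},D){\varphi}$, produces errors with constants independent of $j$ (the bicharacteristic index), which is exactly where Proposition~\ref{wtpest} and condition~\eqref{cond0} are used to guarantee $r \in S^{1-{\varepsilon}}_{1-{\varepsilon},{\varepsilon}}$ with uniform seminorms. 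A secondary subtlety is making sure that the cut-off ${\chi}$ in $B$ and the support condition $d \le c|{\xi}|^{-{\varepsilon}}$ are compatible with $\supp{\varphi} \subset \{|x| \ls {\lambda}^{-{\delta}}\}$ and $|\partial_x{\omega}| \ll 1$, so that the expansion genuinely only sees the prepared region of phase space; this is what the remark after \eqref{trevesexp} about $|{\xi}| \ge c{\lambda}$ secures, and it needs $\delta > \varepsilon$ so that the $x$-localization is finer than the $d$-localization. Once these uniformities are in place, the statement of Proposition~\ref{expprop} follows by simply reading off the exponents.
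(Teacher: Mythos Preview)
Your proposal is correct and follows essentially the same argument as the paper: the proposition is stated in the paper as a summary of the preceding discussion, which carries out exactly the bookkeeping you describe---counting powers of $\lambda$ from $\xi$-derivatives of symbols in $S^{1-\varepsilon}_{1-\varepsilon,\varepsilon}$, from $x$-derivatives of $\varphi \in S(1,g_\delta)$, and from factors $\lambda D_x^\alpha\omega$, then estimating \eqref{exp0} via $\partial_x^2\omega(t,0)\equiv 0$ and balancing the two error exponents by setting $\delta = (1+2\varepsilon)/3$. Your additional remarks about uniformity in $j$ and the compatibility of the cut-off with $\supp\varphi$ are apt but are already built into the preparation of Propositions~\ref{wtpest} and~\ref{prepprop}, so no further work is needed there.
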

  
In Section~\ref{transp} we shall choose ${\varepsilon} = {\varrho} = 1/10$
which gives ${\delta} = 2/5$, $1 - 4{\varepsilon} = 3/5$, $1 - {\delta} -{\varepsilon} =
1/2$ and $(7{\varepsilon} -1)/3 = -1/10$.

\section{The eikonal equation}\label{eik}

The first term in the expansion~\eqref{exp1} is the eikonal equation 
\begin{equation}\label{eic}
\partial_t {\omega} - s(t,x,{\xi}_0 + \partial_x {\omega}) = 0 \qquad
{\omega}(0,x) = 0 
\end{equation}
where $s(t,x,\xi) = \lambda     ^{-1} r(t,x,\lambda \xi)$. 
This we can solve by using the Hamilton-Jacobi equations:
\begin{equation}\label{hamjac}
\left\{
\begin{aligned} 
&\partial_t x = -\partial_{\xi}s(t,x,{\xi}_0 +{\xi})\\
&\partial_t{\xi} = \partial_{x}s(t,x,{\xi}_0 +{\xi})
\end{aligned}
\right.
\end{equation}
with $(x(0), {\xi}(0)) = (x,0)$, and letting $\partial_x {\omega}
= {\xi}$ and $\partial_t {\omega} = s(t,x,\xi_0 + \partial_x {\omega})$
with $ {\omega}(0,x) = 0 $.
Since $s = \nabla s = 0$ on ${\Gamma}$ we find that $\partial_t x =
\partial_t {\xi} = 0$ when
$x= {\xi} = 0$, so  by uniqueness  $\partial_t {\omega}(t,0) \equiv \partial_x {\omega}(t,0) \equiv 0$.

We shall solve the Hamilton-Jacobi equations by scaling. Recall that
$s(t,x,{\xi}) \in S({\lambda^{-\varepsilon}},g_{\varepsilon})$  for
some chosen $0 < {\varepsilon} < 1/3$ in 
homogeneous coordinates by Proposition~\ref{prepprop}. By
Proposition~\ref{symplprop} we may assume 
that $L(t)\equiv \set  
{(t,x,0,0)}$, $\forall\, t$, and $\partial_x^2 s = 0$ on~${\Gamma}$.
Since $s$, $\partial s $ and $\partial_x^2 s$
vanish on ${\Gamma}$, Taylor's formula gives
\begin{equation}\label{eiceq}
 \partial_{\xi}s(t,x,{\xi}_0 + {\xi}) =
 \partial_x\partial_{\xi}s(t,0,{\xi}_0)x + 
 \partial_{\xi}^2s(t,0,{\xi}_0){\xi} + \w{{\varrho}_1(t,x,{\xi})w,w} 
\end{equation}
where $w = (x,{\xi})$, $\partial_x\partial_{\xi}s(t,0,\xi_0) = \Cal O(1)$
by~\eqref{cond0}, $\partial_{\xi}^2s(t,0,{\xi}_0) = \Cal O(
{\lambda}^{\varepsilon})$ and 
${\varrho}_1 \in S({\lambda}^{2{\varepsilon}}, g_{\varepsilon})$, since $ \wt p(t,x;\tau, \xi) \cong \tau - r(t,x,\xi) $ 
modulo terms vanishing of third order at~$ \Gamma $.
Similarly, we find
\begin{equation}\label{eiceq0}
 \partial_x s(t,x,{\xi}_0 +{\xi}) = \partial_x\partial_{\xi}s(t,0,{\xi}_0){\xi} +
 \w{{\varrho}_2(t,x,{\xi})w,w} 
\end{equation}
where ${\varrho}_2 \in S({\lambda}^{2\varepsilon},
g_{\varepsilon})$.  

Now we put $(x, {\xi}) = (y {\lambda}^{-3\varepsilon}, {\eta}
{\lambda}^{-4\varepsilon})$. Then by
using~\eqref{eiceq} and~\eqref{eiceq0} we find that ~\eqref{hamjac}
transforms into
\begin{equation}\label{scaleic}
\left\{
\begin{aligned} 
&\partial_t y = -B(t)y  - C(t) {\eta} + {\sigma}_1(t,z)\\
&\partial_t {\eta} = B(t){\eta} +  {\sigma}_2(t,z)
\end{aligned}
\right.
\end{equation}
where $z= (y,{\eta})$, $B(t) = \partial_x\partial_{\xi}s(t,0,{\xi}_0)$ and
$C(t) = {\lambda^{-\varepsilon}} 
\partial_{\xi}^2s(t,0,{\xi}_0)$ are uniformly bounded, and 
\begin{equation}
\left\{ 
\begin{aligned}
&{\sigma}_1(t,z) = 
{\lambda}^{-3\varepsilon}\w{{\varrho}_1(t,y{\lambda}^{-3\varepsilon},
{\eta} {\lambda}^{-4\varepsilon})(y,
{\lambda}^{-{\varepsilon}}{\eta}), (y, {\lambda}^{-{\varepsilon}}{\eta})}\\
&{\sigma}_2(t,z) =
- {\lambda}^{-2\varepsilon}\w{{\varrho}_2(t,y{\lambda}^{-3\varepsilon},
{\eta}{\lambda}^{-4\varepsilon})(y, {\lambda}^{-{\varepsilon}}{\eta}),
(y, {\lambda}^{-{\varepsilon}}{\eta})}
\end{aligned}
\right.
\end{equation}
are uniformly
bounded in $C^\infty$ and vanish of second order in $z$. Then
~\eqref{scaleic} has a uniformly 
bounded $C^\infty$ solution if $z(0)$ is uniformly bounded. This means
that if $x(0)= \Cal O({\lambda}^{-3\varepsilon})$ and ${\xi}(0) = 0$
then we find  $x=
\Cal O({\lambda}^{-3\varepsilon})$ and $\partial_x {\omega} = {\xi} = \Cal
O({\lambda}^{-4\varepsilon})$ for any ~$t\in I$. 
The scaling gives that
$\partial_x^{\alpha}\partial_x{\omega} =  \Cal
O({\lambda}^{(-4+3|{\alpha}|){\varepsilon}})$ when $|x| \ls
{\lambda}^{-3{\varepsilon}}$. Since
${\omega}(t,0) \equiv 0$ we obtain that ${\omega} = \Cal
O({\lambda}^{-7\varepsilon})$ when $|x| \ls
{\lambda}^{-3{\varepsilon}}$, thus  ${\omega}(t,\cdot) \in
S({\lambda}^{-7{\varepsilon}}, 
g_{3{\varepsilon}})$.

Now by differentiating~\eqref{eic} twice we find that
\begin{equation*}
 \partial_t \partial_x^2 {\omega}(t,0) = 
 2\re \big(\partial_{\xi}\partial_xs(t,0,{\xi}_0)\partial_x^2
 {\omega}(t,0)\big) +   \partial_{x}^2 
 {\omega}(t,0) \partial_{\xi}^2s(t,0,{\xi}_0) \partial_{x}^2
 {\omega}(t,0) 
\end{equation*}
because $\partial_x {\omega}(t,0) =
\partial_{\xi}s(t,0,{\xi}_0) =  \partial_x^2s(t,0,{\xi}_0) = 0$.
Since $\partial_x^2{\omega}(0,x)\equiv 0$ 
we find by uniqueness that $\partial_x^2  
{\omega}(t,0) \equiv 0$. This gives that $\partial_t {\omega} = \Cal
O({\lambda}^{-7{\varepsilon}})$ when $|x| \ls
{\lambda}^{-3{\varepsilon}}$, and we obtain the following result.

\begin{prop}\label{omegalem}
Let\/ $0 < {\varepsilon} < 1/3$, and assume that
Propositions~\ref{prepprop} and ~\ref{symplprop} hold.
Then there exists a real ${\omega}(t,\cdot) \in
S({\lambda}^{-7{\varepsilon}}, 
g_{3{\varepsilon}})$ satisfying $\partial_t {\omega} =
\lambda^{-1} r(t,x,\lambda({\xi}_0 + \partial_x {\omega}))$ when $|x| \ls
{\lambda}^{-3{\varepsilon}}$  and $t \in I$ such that ${\omega}(t,0)
\equiv 0 $ and $\partial_x^2{\omega}(t,0) \equiv 0$.
We  find that the values of\/
$(t,x;  {\lambda}\partial_{t}{\omega}(t,x), {\lambda}({\xi}_0 +
\partial_{x}{\omega}(t,x)))$ have homogeneous distance to the rays through\/ ${\Gamma}$
which is $\ls {\lambda}^{-3\varepsilon} $ when $|x| \ls
{\lambda}^{-3{\varepsilon}}$ and $t \in I$.
\end{prop}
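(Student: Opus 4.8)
The plan is to solve the eikonal equation \eqref{eic} by the method of characteristics, carefully tracking the scaling in $\lambda$ so that the solution lands in the symbol class $S(\lambda^{-7\varepsilon},g_{3\varepsilon})$. First I would set up the Hamilton--Jacobi system \eqref{hamjac} for the bicharacteristic flow of $s(t,x,\xi)=\lambda^{-1}r(t,x,\lambda\xi)$, with initial data $(x(0),\xi(0))=(x,0)$, and recover $\omega$ by integrating $\partial_t\omega=s(t,x,\xi_0+\partial_x\omega)$ along the flow with $\omega(0,x)=0$; that $\partial_t\omega(t,0)\equiv\partial_x\omega(t,0)\equiv0$ follows from uniqueness since $s$ and $\nabla s$ vanish on $\Gamma$. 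The key is that under Proposition~\ref{symplprop} we may assume $\partial_x^2 s\equiv 0$ on $\Gamma$, so Taylor expansion at $\Gamma$ gives \eqref{eiceq}--\eqref{eiceq0} with the quadratic remainders $\varrho_1,\varrho_2\in S(\lambda^{2\varepsilon},g_\varepsilon)$, using that $\wt p\cong\tau-r$ modulo terms vanishing of third order at $\Gamma$.

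Next I would perform the anisotropic rescaling $(x,\xi)=(y\lambda^{-3\varepsilon},\eta\lambda^{-4\varepsilon})$, which converts \eqref{hamjac} into the system \eqref{scaleic} whose coefficients $B(t)=\partial_x\partial_\xi s(t,0,\xi_0)$ and $C(t)=\lambda^{-\varepsilon}\partial_\xi^2 s(t,0,\xi_0)$ are uniformly bounded by condition~\eqref{cond0}, and whose nonlinear terms $\sigma_1,\sigma_2$ are uniformly bounded in $C^\infty$ and vanish to second order in $z=(y,\eta)$. Here is where the extra factors of $\lambda^{-\varepsilon}$ accumulated in $\varrho_1,\varrho_2$ get absorbed. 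The point of choosing exponents $3\varepsilon$ and $4\varepsilon$ (differing by $\varepsilon$, matching the weight $\lambda^\varepsilon$ of $\partial_\xi^2 s$) is precisely that it makes \eqref{scaleic} a uniformly bounded $C^\infty$ ODE system, hence solvable on the fixed interval $I$ with uniformly bounded $C^\infty$ solution whenever $z(0)$ is bounded. Unwinding the scaling then yields $x=\mathcal O(\lambda^{-3\varepsilon})$, $\partial_x\omega=\xi=\mathcal O(\lambda^{-4\varepsilon})$ for $t\in I$ when $|x|\ls\lambda^{-3\varepsilon}$, and differentiating the flow gives $\partial_x^\alpha\partial_x\omega=\mathcal O(\lambda^{(-4+3|\alpha|)\varepsilon})$; combined with $\omega(t,0)\equiv0$ this is exactly the statement $\omega(t,\cdot)\in S(\lambda^{-7\varepsilon},g_{3\varepsilon})$.

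Then I would verify $\partial_x^2\omega(t,0)\equiv0$: differentiating \eqref{eic} twice in $x$ at $x=0$ and using $\partial_x\omega(t,0)=\partial_\xi s(t,0,\xi_0)=\partial_x^2 s(t,0,\xi_0)=0$ gives a homogeneous linear (Riccati-type) ODE for $\partial_x^2\omega(t,0)$ with zero initial data, so by uniqueness it vanishes identically; this also yields $\partial_t\omega=\mathcal O(\lambda^{-7\varepsilon})$ on $|x|\ls\lambda^{-3\varepsilon}$. Finally, the homogeneous-distance claim follows because $(\lambda\partial_t\omega,\lambda(\xi_0+\partial_x\omega))$ differs from a point of $\Gamma$ (where the fiber variable is $(0,\xi_0)$) by $(\lambda\cdot\mathcal O(\lambda^{-7\varepsilon}),\lambda\cdot\mathcal O(\lambda^{-4\varepsilon}))$, which in homogeneous coordinates — dividing the fiber components by $\langle(\tau,\xi)\rangle\cong\lambda$ — is of size $\mathcal O(\lambda^{-7\varepsilon})+\mathcal O(\lambda^{-4\varepsilon})$, dominated by $\lambda^{-3\varepsilon}$; together with $|x|\ls\lambda^{-3\varepsilon}$ this gives homogeneous distance $\ls\lambda^{-3\varepsilon}$ to the rays through $\Gamma$. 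The main obstacle is the bookkeeping in the rescaling step: one must check that every quadratic remainder really carries enough negative powers of $\lambda$ (i.e.\ that \eqref{eiceq}--\eqref{eiceq0} hold with the claimed classes, which rests on the third-order vanishing of $\wt p-\tau+r$ at $\Gamma$ and on the normalization $\partial_x^2 s\equiv0$ from Proposition~\ref{symplprop}) so that the rescaled system \eqref{scaleic} is genuinely uniformly bounded in $C^\infty$ and not merely bounded to finite order.
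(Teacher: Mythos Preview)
Your proposal is correct and follows essentially the same route as the paper: solve the Hamilton--Jacobi system \eqref{hamjac}, Taylor-expand $\partial_\xi s$ and $\partial_x s$ at $\Gamma$ using $\partial_x^2 s\restr{\Gamma}=0$ from Proposition~\ref{symplprop}, perform the anisotropic rescaling $(x,\xi)=(y\lambda^{-3\varepsilon},\eta\lambda^{-4\varepsilon})$ to obtain the uniformly bounded system \eqref{scaleic}, and then read off the symbol estimates and the vanishing of $\partial_x^2\omega(t,0)$ by differentiating \eqref{eic}. Your explicit homogeneous-distance computation at the end is a bit more detailed than what the paper writes but matches the intended argument.
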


\section{The transport equations}\label{transp}

The next term in~\eqref{exp1} is the transport equation, which is
equal to
\begin{equation}\label{trans}
D_p {\varphi}_0  +
 {q_0}{\varphi}_0 = 0 \qquad \text{at ${\Gamma}$}
\end{equation}
where $D_p = D_t - \sum_{j}\partial_{{\xi}_j}r(t,x,\lambda({\xi}_0 +\partial_x
{\omega(t,x)})) D_{x_j}  = D_t$ when $x = 0$ and 
\begin{equation}\label{q0def}
 q_0(t) = D_t
  |\nabla p(t,0,{\xi}_0)|/2|\nabla p(t,0,{\xi}_0)| + {p_0}(t,0,{\xi}_0)/|\nabla
  p(t,0,{\xi}_0)| = \Cal O({\lambda}^{\varepsilon}) 
\end{equation}
modulo $\Cal O({\lambda}^{2{\varepsilon}}|x|)$ when $|x| \ls
{\lambda}^{-{\delta}}$ by~\eqref{symb}. Here $ {\omega}(t,x) $ is given
by Proposition~\ref{omegalem}.

\begin{lem}\label{translemma}
We have that  
\begin{equation*}
 D_p = D_t + \sum_{j}^{} \w{a_j(t), x} D_{x_j} + R(t,x,D_x) 
\end{equation*}
where $\br^{n-1} \ni a_j(t) = \Cal O(1)$ and $R(t,x,D_x)$ is a first order
differential operator in $x$ with 
coefficients that are $\Cal O({\lambda}^{3{\varepsilon}}|x|^2)$ when $|x|
\ls {\lambda}^{-3{\varepsilon}}$.
\end{lem}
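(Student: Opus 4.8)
The plan is to Taylor-expand the coefficient $\partial_{\xi_j}r(t,x,\lambda(\xi_0+\partial_x\omega(t,x)))$ of $D_{x_j}$ in $D_p$ around the bicharacteristic $x=0$, using the structure established in Sections~\ref{norm} and~\ref{eik}. Recall from Proposition~\ref{symplprop} that $\partial_x^2 r(t,0)\equiv 0$, and from Proposition~\ref{omegalem} that $\partial_x\omega(t,0)\equiv 0$ with $\partial_x^2\omega(t,0)\equiv 0$ and $\omega(t,\cdot)\in S(\lambda^{-7\varepsilon},g_{3\varepsilon})$. First I would note that since $\partial_{\xi_j}r$ vanishes on $\Gamma$ (because $r=|\nabla r|=0$ there), we can write $\partial_{\xi_j}r(t,x,\lambda(\xi_0+\partial_x\omega)) = \w{\partial_x\partial_{\xi_j}r(t,0,\xi_0),x} + \lambda\partial_{\xi}\partial_{\xi_j}r(t,0,\xi_0)\cdot\partial_x\omega(t,x) + O(\lambda^{2\varepsilon}|x|^2)$ by the second-order Taylor formula, using the symbol estimates $\partial_x\partial_\xi r = O(1)$ (from~\eqref{cond0}, i.e.~\eqref{cond000}) and $\partial_\xi^2 r = O(\lambda^\varepsilon)$, and the fact that third-order terms of $r$ are $O(\lambda^{2\varepsilon})$ as in~\eqref{eiceq}.

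The key step is then to handle the middle term $\lambda\partial_\xi\partial_{\xi_j}r(t,0,\xi_0)\cdot\partial_x\omega(t,x)$. Since $\partial_x\omega(t,0)=0$ and $\partial_x^2\omega(t,0)=0$, a Taylor expansion of $\partial_x\omega$ at $x=0$ starts at second order: $\partial_{x_k}\omega(t,x) = O(\lambda^{2\varepsilon}|x|^2)$ when $|x|\ls\lambda^{-3\varepsilon}$, since $\partial_x^\alpha\partial_x\omega = O(\lambda^{(-4+3|\alpha|)\varepsilon})$ from the scaling in Section~\ref{eik}, so the $|\alpha|=2$ term giving the leading behaviour of $\partial_x\omega$ is $O(\lambda^{2\varepsilon})$. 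Multiplying by $\lambda\cdot O(\lambda^\varepsilon)$ gives a contribution $O(\lambda^{3\varepsilon}|x|^2)$. Thus this term is absorbed into $R(t,x,D_x)$. I would set $a_j(t) = \partial_x\partial_{\xi_j}r(t,0,\xi_0)$, which is $O(1)$ and a vector in $\br^{n-1}$ (the $x$-directions, since the $\tau$ and $\xi_1$-type entries drop out by the normal form), and collect the remaining quadratic-in-$x$ terms — namely the third-order Taylor remainder of $r$ and the $\partial_x\omega$ contribution — into $R$, whose coefficients are then $O(\lambda^{2\varepsilon}|x|^2) + O(\lambda^{3\varepsilon}|x|^2) = O(\lambda^{3\varepsilon}|x|^2)$ when $|x|\ls\lambda^{-3\varepsilon}$.

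The main obstacle I anticipate is bookkeeping the exponents of $\lambda$ carefully and confirming that the worst term is indeed $O(\lambda^{3\varepsilon}|x|^2)$ rather than something larger: one must check that the argument $\lambda(\xi_0+\partial_x\omega)$ stays in the region $|\xi|\cong\lambda$ where the symbol estimates from Proposition~\ref{prepprop} apply (this holds because $\partial_x\omega = O(\lambda^{-4\varepsilon})\ll 1$), and that differentiating the composition $x\mapsto\partial_{\xi_j}r(t,x,\lambda(\xi_0+\partial_x\omega(t,x)))$ in $x$ via the chain rule does not produce a term worse than claimed — each $x$-derivative hitting the $\xi$-slot contributes $\lambda\partial_x^2\omega$, which is again controlled. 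A secondary point is verifying that the linear-in-$x$ coefficient has no $\partial_\tau$ or $\tau$-direction component, which follows since $r$ is independent of $\tau$ by the normal form in Proposition~\ref{prepprop}, so only the $n-1$ genuine $x$-directions survive, matching $a_j(t)\in\br^{n-1}$. Once these checks are in place the identity follows directly from assembling the Taylor expansions.
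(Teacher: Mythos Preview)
Your approach is essentially the paper's: Taylor expand the coefficient of $D_{x_j}$ at $x=0$, using $\nabla r=0$, $\partial_x^2 r=0$ on $\Gamma$ and $\partial_x\omega(t,0)=\partial_x^2\omega(t,0)=0$. The paper organizes it slightly differently by passing to $s(t,x,\xi)=\lambda^{-1}r(t,x,\lambda\xi)$ and bounding the second $x$-derivative of $x\mapsto \partial_{\xi_j}s(t,x,\xi_0+\partial_x\omega(t,x))$ directly via the chain rule, obtaining
\[
\partial_x^2\partial_\xi s + 2\re\big(\partial_x\partial_\xi^2 s\,\partial_x^2\omega\big) + \partial_x^2\omega\,\partial_\xi^3 s\,\partial_x^2\omega + \partial_\xi^2 s\,\partial_x^3\omega = \mathcal O(\lambda^{3\varepsilon}),
\]
rather than splitting into an $(x,\xi)$-Taylor expansion as you do; the two are equivalent.

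Two bookkeeping slips to fix. First, since $D_p=D_t-\sum_j\partial_{\xi_j}r(\cdots)D_{x_j}$, the linear coefficient is $a_j(t)=-\partial_x\partial_{\xi_j}s(t,0,\xi_0)=-\partial_x\partial_{\xi_j}r(t,0,\lambda\xi_0)$, with a minus sign and evaluated at $\lambda\xi_0$, not $\xi_0$. Second, your estimate ``$\partial_\xi^2 r=\mathcal O(\lambda^\varepsilon)$'' is off by a factor $\lambda$: since $r\in S^{1-\varepsilon}_{1-\varepsilon,\varepsilon}$ one has $\partial_\xi^2 r(t,0,\lambda\xi_0)=\mathcal O(\lambda^{\varepsilon-1})$, equivalently $\partial_\xi^2 s(t,0,\xi_0)=\mathcal O(\lambda^\varepsilon)$. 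With this correction your middle term is $\lambda\cdot\mathcal O(\lambda^{\varepsilon-1})\cdot\mathcal O(\lambda^{2\varepsilon}|x|^2)=\mathcal O(\lambda^{3\varepsilon}|x|^2)$, so the final bound stands; as written the arithmetic $\lambda\cdot\mathcal O(\lambda^\varepsilon)\cdot\mathcal O(\lambda^{2\varepsilon}|x|^2)$ would give $\mathcal O(\lambda^{1+3\varepsilon}|x|^2)$.
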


\begin{proof}
As before, we shall put $ s(t,x,\xi) = 
\lambda^{-1} r(t,x,\lambda \xi) \in  S( {\lambda^{-\varepsilon}},g_{\varepsilon})$.
Since $\partial_x^2{\omega}(t,0) \equiv 0$ by Proposition~\ref{omegalem}
we have from Taylor's formula that $a_j(t) = 
 - \partial_x \partial_{{\xi}_j}s(t,0,{\xi}_0)$ which is uniformly bounded
 by~\eqref{cond000} and Proposition~\ref{symplprop}.
The coefficients of the error term $R$ are given by the second order
$x$ derivatives of the coefficients of $D_p$ which are 
\begin{equation*}
 \partial_x^2\partial_{\xi}s + 2\re \big(
 \partial_x\partial_{\xi}^2s\partial_x^2 {\omega}\big) +\partial_x^2 {\omega}
 \partial_{\xi}^3s\partial_x^2 {\omega} + \partial_{\xi}^2 s
 \partial_x^3 {\omega} = \Cal O({\lambda}^{3{\varepsilon}})
\end{equation*}
when $|x| \ls {\lambda}^{-3{\varepsilon}} \ll \lambda^{-\varepsilon}$ by
Propositions~\ref{prepprop} and~\ref{omegalem}, which proves the result.
\end{proof}

We obtain new variables $y$ in $\br^{n-1}$ by solving
\begin{equation*}
 \partial_t y_j = \w{a_j(t), y} \qquad y_j(0) = x_j \qquad \forall\, j
\end{equation*}
Then $D_t + \sum_{j}^{} \w{a_j(t), x} D_{x_j}$ is changed into
$D_t$ but $D_{x_j} = D_{y_j}$ is unchanged, and we will for simplicity keep the notation $(t,x)$. 
The change of variables is uniformly bounded since $a_j = \Cal O(1)$, so
it preserves the neighborhoods $|x| \ls {\lambda}^{-{\nu}}$ and
symbol classes $S({\lambda}^{\mu}, g_{\nu})$, $\forall\, {\mu},\, {\nu}$.
We shall then solve the approximate transport equation
\begin{equation}\label{mtrans}
 D_t{\varphi}_0 + {q_0}(t){\varphi}_0 = 0
\end{equation}
where ${\varphi}_0(0,x) \in S(1, g_{\delta})$ is supported where $|x| \ls
{\lambda}^{-{\delta}}$ and $q_0(t)$ is given by~\eqref{q0def}.  
If ${\lambda}^{-{\delta}} \ll {\lambda}^{-3{\varepsilon}}$ then by
Lemma~\ref{translemma} the approximation errors will be in 
$S({\lambda}^{3{\varepsilon}-{\delta}}, g_{{\delta}})$, so we
will assume ${\delta} > 3{\varepsilon}$. In fact, since
$\partial_x$ maps $ S(1, g_{\delta})$ into $S({\lambda}^{{\delta}},
g_{\delta})$ and $|x| \ls {\lambda}^{-{\delta}}$, we find $R(t,x,D_x){\varphi}_0
\in  S({\lambda}^{3{\varepsilon} - {\delta}}, g_{\delta})$. 
If we put ${\delta} = 4 {\varepsilon}$ then the approximation errors
in the transport equation will be $\Cal O({\lambda}^{-{\varepsilon}})$.

If we choose the initial data
${\varphi}_0(0,x) = {\phi}_0(x)= {\varphi}({\lambda}^{{\delta}}x)$,
where ${\varphi} 
\in C^\infty_0$ satisfies ${\varphi}(0) = 1$, we obtain the solution
\begin{equation}
 {\varphi}_0(t,x) =
 {\phi}_0(x) \exp(- i B(t)) 
\end{equation}
where $B' = q_0$ and B(0) = 0. 
By condition~\eqref{c3} we find that 
$ |{\varphi}_0(t,x)| \le  |{\varphi}({\lambda}^{{\delta}}x)|$, and $|x| \ls
{\lambda}^{-{\delta}}$ in $\supp {\varphi}_0$, which also holds in 
the original $x$ coordinates.
Observe that $D_{x}^{\alpha}{\varphi}_0 = \Cal 
O({\lambda}^{\delta|{\alpha}|})$, $\forall\, {\alpha}$, and we have
from the transport 
equation that $D_t {\varphi}_0 = - q_0{\varphi}_0 = \Cal
O({\lambda}^{{\varepsilon}})$ by~\eqref{q0def}. Since $D_t^k q_0 = \Cal
O({\lambda}^{\varepsilon(k+1)})$ 
by Proposition~\ref{prepprop}, we find
by induction that ${\varphi}_0 \in S(1, g_{\delta})$. 

After solving the eikonal equation and the approximate transport
equation, we find from Proposition~\ref{expprop} that the terms in the
expansion~\eqref{exp1} 
are $\Cal O({\lambda}^{(7{\varepsilon} -1)/3}) + \Cal
O({\lambda}^{-{\varepsilon}})$, if  ${\varepsilon} < 1/7$
and ${\delta} = (1+2{\varepsilon})/3 = 4{\varepsilon}$,  
and all the terms
contain the factor $ \exp(- i B(t))$. We take ${\varepsilon}  =
1/10$, ${\delta} = 2/5$ which gives
$(7{\varepsilon} -1)/3 = -{\varepsilon} = -1/10$. Then the expansion
in Proposition~\ref{expprop} is in
multiples of ~${\lambda}^{-1/2}$, but since the terms 
of~\eqref{exp1} are $\Cal O({\lambda}^{-1/10})$
we will take ${\varrho} = 1/10$.

Thus the approximate transport equation for ${\varphi}_1$ is
\begin{equation}
 D_t{\varphi}_1 + {q_0}(t){\varphi}_1 = {\lambda}^{1/10}
R_1  \exp(-i B(t))  \qquad \text{at ${\Gamma}$}
\end{equation}
where $R_1$  is
uniformly bounded in the symbol class $S({\lambda}^{-1/10},
g_{2/5})$ and  supported where $|x| \ls  {\lambda}^{-{2/5}}$. In
fact, $R_1$ contains both the error terms 
from the transport equation~\eqref{trans} for~ ${\varphi}_0$ and the
terms that are $\Cal O({\lambda}^{-1/10})$ in~\eqref{exp1}.
By putting 
$$
{\varphi}_1(t,x) =
\exp(-iB(t)){\phi}_1(t,x)
$$ 
the transport equation reduces to solving 
\begin{equation}\label{ltrans}
 D_t {\phi}_1 = {\lambda}^{1/10}
R_1  
\end{equation}
with initial values ${\phi}_1(0,x) = 0$. Then $ {\phi}_1 \in  S(1,
g_{2/5})$ will have support where $|x| \ls
{\lambda}^{-{2/5}}$. 
 
Similarly, the general term in the expansion is
${\varphi}_k{\lambda}^{-k/10}$ where 
${\varphi}_k$ will solve the approximate transport equation
\begin{equation}\label{gentrans}
D_t{\varphi}_k + q_0(t){\varphi}_k = {\lambda}^{k/10} R_k
\exp(i B(t))  \qquad k \ge 1
\end{equation}
with $R_k$   is
uniformly bounded in the symbol class $S({\lambda}^{-k/10},
g_{2/5})$ and  is  supported where $|x| \ls  {\lambda}^{-{2/5}}$.
In fact, $R_k$ contains the error terms from the transport
equation~\eqref{trans} for ~${\varphi}_{k-1}$ 
and also the terms that are $\Cal O({\lambda}^{-k/10})$ in \eqref{exp1}.
Taking
 ${\varphi}_k =  \exp(-i B(t)) {\phi}_k$ we obtain the equation 
\begin{equation}\label{transk}
 D_t{\phi}_k = {\lambda}^{k/10} R_k \in S(1,
 g_{2/5})
\end{equation}
with initial values ${\phi}_k(0,x) = 0$,
which can be solved with ${\phi}_k \in 
 S(1, g_{2/5})$ uniformly having support where  $|x| \ls
{\lambda}^{-{2/5}}$. Proceeding we obtain an solution modulo $\Cal
O({\lambda}^{-N/10})$ for any~ $N$.

\begin{prop}\label{transprop}
Choosing ${\delta} = 2/5$, ${\varepsilon} = 1/10$ and ${\varrho} =
1/10$ we can solve the transport equations~\eqref{trans}
and~\eqref{gentrans} with ${\varphi}_k \in S(1,g_{2/5})$ 
uniformly having support where  $|x| \ls {\lambda}^{-{2/5}}$
and $ |t| \ls 1 $, $\forall\, k$,
such that ${\varphi}_0(0,0) = 1$ and ${\varphi}_k(0,x)
\equiv 0$, $k \ge 1$. 
\end{prop}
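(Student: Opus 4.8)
The plan is to solve the transport equations one after another by the method of characteristics, using the explicit form of the leading equation~\eqref{trans} and then a telescoping recursion for the corrections ${\varphi}_k$, $k \ge 1$; the values $({\delta},{\varepsilon},{\varrho}) = (2/5,1/10,1/10)$ will come out of matching the sizes of the two error sources. First I would straighten the transport operator: by Lemma~\ref{translemma}, together with the uniformly bounded change of the $x$-variables obtained by solving $\partial_t y_j = \w{a_j(t),y}$, $y_j(0) = x_j$ (which is uniform in $C^\infty$ since $a_j = \Cal O(1)$ and $|t| \ls 1$, hence preserves the neighbourhoods $|x| \ls {\lambda}^{-{\nu}}$ and the classes $S({\lambda}^{\mu}, g_{\nu})$), the operator $D_p$ becomes $D_t$ modulo the first order operator $R(t,x,D_x)$ of Lemma~\ref{translemma}, whose coefficients remain of size $\Cal O({\lambda}^{3{\varepsilon}}|x|^2)$. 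On $|x| \ls {\lambda}^{-{\delta}}$ this maps $S(1, g_{\delta})$ into $S({\lambda}^{3{\varepsilon}-{\delta}}, g_{\delta})$; asking this to be $S({\lambda}^{-{\varepsilon}}, g_{\delta})$, i.e. ${\delta} = 4{\varepsilon}$, and comparing with Proposition~\ref{expprop}, whose remainder is $\Cal O({\lambda}^{(7{\varepsilon}-1)/3})$ under its constraint ${\delta} = (1+2{\varepsilon})/3$, forces ${\varepsilon} = 1/10$, ${\delta} = 2/5$, so that both errors are of the single order ${\lambda}^{-1/10}$ and taking ${\varrho} = 1/10$ turns~\eqref{udef} into a genuine expansion in powers of ${\lambda}^{-1/10}$.

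For the leading term I would solve $D_t{\varphi}_0 + q_0(t){\varphi}_0 = 0$ with ${\varphi}_0(0,x) = {\varphi}({\lambda}^{\delta}x)$, where ${\varphi} \in C_0^\infty$ and ${\varphi}(0) = 1$; the solution is ${\varphi}_0(t,x) = {\varphi}({\lambda}^{\delta}x)\exp(-iB(t))$ with $B' = q_0$, $B(0) = 0$, which is supported where $|x| \ls {\lambda}^{-{\delta}}$ and $|t| \ls 1$ by construction. Since $q_0 = \Cal O({\lambda}^{\varepsilon})$ and $D_t^k q_0 = \Cal O({\lambda}^{{\varepsilon}(k+1)})$ by Proposition~\ref{prepprop}, one gets $\partial_t^k B = \Cal O({\lambda}^{k{\varepsilon}})$ for $k \ge 1$; together with condition~\eqref{c3}, which keeps $|\exp(-iB(t))|$ bounded so that $|{\varphi}_0(t,x)| \le |{\varphi}({\lambda}^{\delta}x)|$, an induction on the order of differentiation — using $D_t{\varphi}_0 = -q_0{\varphi}_0$ for the $t$-derivatives and $D_x^{\alpha}{\varphi}_0 = \Cal O({\lambda}^{{\delta}|{\alpha}|})$ for the $x$-derivatives — gives ${\varphi}_0 \in S(1, g_{2/5})$ uniformly, with ${\varphi}_0(0,0) = 1$.

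For $k \ge 1$ I would run the recursion. Given ${\varphi}_0,\dots,{\varphi}_{k-1}$, the term of order ${\lambda}^{-k/10}$ that remains in the expansion~\eqref{exp1}, that is the right-hand side ${\lambda}^{k/10}R_k$ of~\eqref{gentrans}, collects the error $R(t,x,D_x){\varphi}_{k-1}$ coming from Lemma~\ref{translemma} and the $\Cal O({\lambda}^{-k/10})$ remainder of Proposition~\ref{expprop}, and is uniformly in $S({\lambda}^{-k/10}, g_{2/5})$ with support in $|x| \ls {\lambda}^{-2/5}$. Writing ${\varphi}_k = \exp(-iB(t)){\phi}_k$ and using the identity $D_t(\exp(-iB){\phi}_k) + q_0\exp(-iB){\phi}_k = \exp(-iB)D_t{\phi}_k$, equation~\eqref{gentrans} reduces to $D_t{\phi}_k = {\lambda}^{k/10}R_k$ with ${\phi}_k(0,x) \equiv 0$, which is solved by integrating in $t$; since $|t| \ls 1$ this gives ${\phi}_k = {\lambda}^{k/10}\cdot\Cal O({\lambda}^{-k/10}) = \Cal O(1)$ in $S(1, g_{2/5})$ with the asserted support, hence ${\varphi}_k \in S(1, g_{2/5})$ uniformly with ${\varphi}_k(0,x) \equiv 0$. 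Iterating up to $k = M$ produces the claimed sequence.

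The step I expect to be the main obstacle is the uniform bookkeeping in this recursion: one must check that ${\lambda}^{k/10}R_k$ genuinely lies in $S({\lambda}^{-k/10}, g_{2/5})$ uniformly in $j$, which requires the two error mechanisms — the curvature error from Lemma~\ref{translemma} and the phase-expansion error from Proposition~\ref{expprop} — to lower the index by exactly one power of ${\lambda}^{-1/10}$ (this is precisely why ${\delta} = 4{\varepsilon} = (1+2{\varepsilon})/3$ pins down ${\varepsilon} = 1/10$), and it requires the common factor $\exp(-iB(t))$ to stay uniformly bounded in $j$, which is exactly the point where condition~\eqref{c3} — equivalently, the choice of the base point $w_j$ on ${\Gamma}_j$ — is used.
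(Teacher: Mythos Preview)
Your proposal is correct and follows essentially the same route as the paper: straighten $D_p$ via Lemma~\ref{translemma} and the linear change of $x$-variables, solve the leading equation explicitly as ${\varphi}_0 = {\varphi}({\lambda}^{\delta}x)\exp(-iB(t))$ using~\eqref{c3} for boundedness, pin down ${\varepsilon} = 1/10$ by equating ${\delta} = 4{\varepsilon}$ with ${\delta} = (1+2{\varepsilon})/3$, and then run the recursion for ${\varphi}_k = \exp(-iB){\phi}_k$ by integrating $D_t{\phi}_k = {\lambda}^{k/10}R_k$ in~$t$. The only cosmetic slip is that the right-hand side of~\eqref{gentrans} carries the factor $\exp(-iB(t))$ (inherited from ${\varphi}_0,\dots,{\varphi}_{k-1}$), which is what makes your reduction to $D_t{\phi}_k = {\lambda}^{k/10}R_k$ go through; otherwise your bookkeeping and your identification of~\eqref{c3} as the key uniformity input match the paper exactly.
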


Now, we get localization in $x$ from the initial values and the
transport equation. To get
localization in $t$ we use that $\im B(t) \le 0$. Then
we find that $\re (-i B) \le 0$ with equality at $t =
0$. Near $\partial {\Gamma}$ we may assume that
$\re (-iB(t))  \ll -\log {\lambda}$ in an interval of length
$\Cal O({\lambda^{-\varepsilon}}) = \Cal O({\lambda}^{-1/10})$
by~\eqref{c4}, \eqref{ldef} and~\eqref{rcond2}. 
Thus by applying a cut-off function 
$$
{\chi}(t) \in S(1, {\lambda}^{1/5}dt^2) \subset  S(1, g_{2/5})
$$ 
such that ${\chi}(0) = 1$ and ${\chi}'(t)$ is supported
where~\eqref{mcond2}  holds, i.e., where 
${\varphi}_k = \Cal O({\lambda}^{-N})$, $\forall\, k$, we obtain a solution modulo $\Cal
O({\lambda}^{-N})$ for any $N$. In fact, if $u_{\lambda}$ is defined
by~\eqref{udef} and $Q$ by Proposition~\ref{prepprop} then
\begin{equation*}
 Q {\chi} u_{\lambda} = {\chi}Q u_{\lambda} + [Q, {\chi}]u_{\lambda}
\end{equation*}
where $[Q, {\chi}] = D_t {\chi}$ is supported
where $u_{\lambda} = \Cal O({\lambda}^{-N})$ which gives terms that are $\Cal
O({\lambda}^{-N})$, $\forall\, N$. Thus, by solving the eikonal
equation~\eqref{eic} for ${\omega}$ and the transport
equations~\eqref{gentrans} for ${\varphi}_k$ for $k \le 10N$, we obtain that $Q{\chi}
u_{\lambda} = \Cal O({\lambda}^{-N})$ for any $N$ and we get the
following remark.

\begin{rem}\label{transrem}
In Proposition~\ref{transprop} we may assume that ${\varphi}_k(t,x) =
{\phi}_k({\lambda}^{1/10}t, 
{\lambda}^{2/5}x) \in S(1, g_{2/5})$, $k \ge 0$, where ${\phi}_k
\in C_0^\infty$ has 
support where $|x| \ls 1$ and $|t| \ls  {\lambda}^{1/10} \le
{\lambda}^{2/5}$, ${\lambda} \ge 1$.
\end{rem}

\section{The proof of Theorem \ref{mainthm}}\label{pfsect}  

For the proof we will need the following modification
of~\cite[Lemma 26.4.14]{ho:yellow}. Recall that $\Cal D'_{{\Gamma}} =
\set{u \in \Cal D': \wf (u) \subset {\Gamma}}$ for ${\Gamma} \subset
T^*\br^n$, and that
$\mn{u}_{(k)}$ is the $L^2$ Sobolev norm of order $k$ of $u \in
C_0^\infty$.

\begin{lem}\label{estlem}
Let 
\begin{equation}\label{estlem0}
 u_{\lambda}(x) =  {\lambda}^{(n-1){\delta}/2}\exp(i{\lambda}^{\varrho}
 {\omega}({\lambda}^{\varepsilon}x)) 
 \sum_{j=0}^M 
 {\varphi}_{j} ({\lambda}^{{\delta}}x){\lambda}^{-j{\kappa}}
\end{equation}
with ${\omega} \in C^\infty (\br^n)$ satisfying $\im {\omega} \ge 0$
and $|d {\omega}| \ge c > 0$, ${\varphi}_j \in C^\infty_0(\br^n)$,
${\lambda} \ge 1$, ${\varepsilon}$, ${\delta}$, ${\kappa}$ and
${\varrho}$ are positive such that $\varepsilon < {\delta} < {\varepsilon} +
{\varrho}$. Here ${\omega}$ and 
${\varphi}_j$ may depend on ${\lambda}$ but uniformly, and ${\varphi}_j$ has fixed
compact support in all but one of the 
variables, for which the support is bounded by $C{\lambda}^{{\delta}}$.  
Then for any integer $N$ we have 
\begin{equation}\label{estlem1}
 \mn{u_{\lambda}}_{(-N)} \le C {\lambda}^{-N({{\varepsilon} + {\varrho}})}
\end{equation}
If ${\varphi}_0(x_0) \ne 0$ and $\im {\omega}(x_0) = 0$ for some $x_0$ then
there exists $c > 0$ and ${\lambda}_0 \ge 1$ so that
\begin{equation}\label{estlem2}
  \mn{u_{\lambda}}_{(-N)} \ge c {\lambda}^{-(N+
    \frac{n}{2})({\varepsilon}+{\varrho}) + (n-1){\delta}/2} \qquad
  {\lambda} \ge {\lambda}_0
\end{equation}
Let ${\Sigma} = \bigcap_{{\lambda} \ge 1} \bigcup_j  \supp
{\varphi}_j({\lambda}\, \cdot)$ 
and let $ {\Gamma}$ be the cone generated by 
\begin{equation}\label{estlem3}
 \set{(x,\partial{\omega}(x)),\ x
   \in {\Sigma},\ \im {\omega}(x) = 0} 
\end{equation}
then for any real $m$ we find ${\lambda}^m u_{\lambda} \to 0$ in $\Cal
D'_{ {\Gamma}}$ so 
${\lambda}^m Au_{\lambda} \to 0$ in $C^\infty$ if $A$ is a 
pseudodifferential operator such that $\wf(A) \cap  {\Gamma} =
\emptyset$. The estimates are uniform if
${\omega} \in C^\infty$ uniformly with fixed lower bound on $|d\re {\omega}|$, and
${\varphi}_j \in C^\infty$ uniformly.
\end{lem}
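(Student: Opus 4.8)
The proof follows the pattern of~\cite[Lemma 26.4.14]{ho:yellow}, and rests on two observations about the oscillatory function $u_{\lambda}$. First, the phase $\lambda^{\varrho}{\omega}(\lambda^{\varepsilon}x)$ has gradient $\lambda^{\varrho+\varepsilon}(\partial{\omega})(\lambda^{\varepsilon}x)$ of size $\cong\lambda^{\varrho+\varepsilon}$: it is $\gs\lambda^{\varrho+\varepsilon}$ because $|d{\omega}|\ge c>0$, and $\ls\lambda^{\varrho+\varepsilon}$ because ${\omega}$ is uniformly $C^{\infty}$ so $|d{\omega}|$ is bounded above. Hence $\widehat{u_{\lambda}}$ is, modulo $\Cal O(\lambda^{-\infty})$, supported in the annulus $|{\xi}|\cong\lambda^{\varrho+\varepsilon}$. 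Second, the amplitudes ${\varphi}_{j}(\lambda^{\delta}x)$ vary on the coarser scale $\lambda^{\delta}$ with ${\delta}<{\varepsilon}+{\varrho}$, so differentiating an amplitude costs only $\lambda^{\delta}$, which is beaten by the factor $\lambda^{-({\varepsilon}+{\varrho})}$ gained from each integration by parts against the phase. Since $\im{\omega}\ge0$ we have $|e^{i\lambda^{\varrho}{\omega}}|\le1$, and these integrations by parts are carried out with the complex-phase operator $L=(i|\partial_{x}{\Phi}|^{2})^{-1}\sum_{k}\overline{\partial_{x_{k}}{\Phi}}\,\partial_{x_{k}}$, where ${\Phi}(x,{\xi})=\lambda^{\varrho}{\omega}(\lambda^{\varepsilon}x)-x{\xi}$ and $Le^{i{\Phi}}=e^{i{\Phi}}$, using the bounds $|\partial_{x}^{\alpha}{\Phi}|\ls\lambda^{\varrho+|{\alpha}|{\varepsilon}}$ for $|{\alpha}|\ge2$ and $|\partial_{x}^{\alpha}({\varphi}_{j}(\lambda^{\delta}\cdot))|\ls\lambda^{|{\alpha}|{\delta}}$. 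A direct computation using only $|d{\omega}|\ge c$ and the boundedness of $d{\omega}$ shows $|\partial_{x}{\Phi}|\gs\lambda^{\varrho+\varepsilon}$ when $|{\xi}|\ls\lambda^{\varrho+\varepsilon}$ is small and $|\partial_{x}{\Phi}|\gs|{\xi}|$ when $|{\xi}|\gs\lambda^{\varrho+\varepsilon}$ is large; each application of $L$ then gains a factor $\ls\lambda^{{\delta}-({\varepsilon}+{\varrho})}+\lambda^{-{\varrho}}<1$ in the first range and $\ls\lambda^{\delta}/|{\xi}|<1$ in the second, so $\widehat{u_{\lambda}}$ is $\Cal O(\lambda^{-M})$ there for every $M$.

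For~\eqref{estlem1} I would write $\mn{u_{\lambda}}_{(-N)}^{2}=\int\w{{\xi}}^{-2N}|\widehat{u_{\lambda}}({\xi})|^{2}\,d{\xi}$ and split the integral into the annulus $c_{0}\lambda^{\varrho+\varepsilon}\le|{\xi}|\le C_{0}\lambda^{\varrho+\varepsilon}$, where I use the Plancherel bound $\mn{u_{\lambda}}_{L^{2}}^{2}\ls1$ (immediate from the normalization $\lambda^{(n-1){\delta}/2}$, from $|e^{i\lambda^{\varrho}{\omega}}|\le1$, and from the support of ${\varphi}_{j}(\lambda^{\delta}\cdot)$ having volume $\ls\lambda^{-(n-1){\delta}}$) together with $\w{{\xi}}^{-2N}\cong\lambda^{-2N({\varepsilon}+{\varrho})}$, and the two complementary ranges, where the rapid decay of $\widehat{u_{\lambda}}$ just established makes the contribution $\Cal O(\lambda^{-\infty})$. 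Adding the three pieces gives $\mn{u_{\lambda}}_{(-N)}\ls\lambda^{-N({\varepsilon}+{\varrho})}$.

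For~\eqref{estlem2} I would use the duality $\mn{u_{\lambda}}_{(-N)}\ge|(u_{\lambda},w_{\lambda})|/\mn{w_{\lambda}}_{(N)}$, where $(\cdot,\cdot)$ is the $L^{2}$ inner product, with the test function $w_{\lambda}(x)=e^{i\lambda^{\varrho}{\omega}(\lambda^{\varepsilon}x)}{\psi}(\lambda^{{\varepsilon}+{\varrho}}(x-x_{\lambda}))$; here ${\psi}\in C_{0}^{\infty}$ is a fixed bump with $\int{\psi}\ne0$ and $x_{\lambda}$ is a point at which ${\varphi}_{0}\ne0$ and $\im{\omega}=0$, which exists by hypothesis. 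Since the width $\lambda^{-({\varepsilon}+{\varrho})}$ of the bump is much smaller than the amplitude scale $\lambda^{-{\delta}}$ (again ${\delta}<{\varepsilon}+{\varrho}$), and $\im{\omega}$, being $\ge0$, vanishes to second order at $x_{\lambda}$, the phase factors in $(u_{\lambda},w_{\lambda})$ combine to $e^{-2\lambda^{\varrho}\im{\omega}}=1+\Cal O(\lambda^{-{\varrho}})$ over the support of the bump, so the leading term of $u_{\lambda}$ contributes $\gs\lambda^{(n-1){\delta}/2-n({\varepsilon}+{\varrho})}$ and the $j\ge1$ terms only $\Cal O(\lambda^{-{\kappa}})$ times that. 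On the other hand $\mn{w_{\lambda}}_{L^{2}}^{2}\cong\lambda^{-n({\varepsilon}+{\varrho})}$ and $\widehat{w_{\lambda}}$ is, modulo $\Cal O(\lambda^{-\infty})$, concentrated in $|{\xi}|\cong\lambda^{\varrho+\varepsilon}$, so $\mn{w_{\lambda}}_{(N)}\cong\lambda^{N({\varepsilon}+{\varrho})}\mn{w_{\lambda}}_{L^{2}}\cong\lambda^{(N-n/2)({\varepsilon}+{\varrho})}$; dividing yields exactly~\eqref{estlem2}.

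Finally, non-stationary phase against a fixed ${\phi}\in C_{0}^{\infty}$ (integrating by parts along $L$, gain $\lambda^{{\delta}-({\varepsilon}+{\varrho})}<1$) gives $(u_{\lambda},{\phi})=\Cal O(\lambda^{-\infty})$, so $\lambda^{m}u_{\lambda}\to0$ in $\Cal D'$; and if $A$ is a pseudodifferential operator with $\wf(A)\cap{\Gamma}=\emptyset$, then for ${\lambda}$ large the frequencies of $u_{\lambda}$ lie in a conic neighborhood of ${\Gamma}$ on which the symbol of $A$ is $\Cal O(\w{{\xi}}^{-\infty})$ (the part of the support where $\im{\omega}>0$ contributing only $\Cal O(\lambda^{-\infty})$), so $Au_{\lambda}=\Cal O(\lambda^{-\infty})$ in $C^{\infty}_{\mathrm{loc}}$ and hence $\lambda^{m}Au_{\lambda}\to0$ in $C^{\infty}$; together these are convergence to $0$ in the normal topology of $\Cal D'_{{\Gamma}}$. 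All constants above depend only on $N$, on the lower bound for $|d{\omega}|$ (resp.\ $|d\re{\omega}|$ in the uniform version), and on finitely many uniform $C^{\infty}$ seminorms of ${\omega}$ and the ${\varphi}_{j}$, which gives the asserted uniformity. The main work is the anisotropic non-stationary phase bookkeeping in the first two paragraphs --- in particular establishing $|\partial_{x}{\Phi}|\gs\max(\lambda^{{\varepsilon}+{\varrho}},|{\xi}|)$ off the critical annulus from $|d{\omega}|\ge c$ alone, and verifying that every integration-by-parts gain is $<1$ because ${\delta}<{\varepsilon}+{\varrho}$; the remainder is routine.
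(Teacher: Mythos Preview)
Your proof is correct and follows essentially the same route as the paper: non-stationary phase for $\widehat{u_\lambda}$ away from the critical frequency scale $|\xi|\cong\lambda^{\varepsilon+\varrho}$, the $L^2$ bound on the annulus, and a duality pairing with a bump at scale $\lambda^{-(\varepsilon+\varrho)}$ for the lower bound. The one cosmetic difference is that the paper tests against the phase-free bump $\psi(\lambda^{\varepsilon+\varrho}\cdot)$ (after translating $x_0$ to $0$) and Taylor-expands $\omega$ so the limit carries the factor $e^{i\langle\re\partial\omega(0),x\rangle}$, whereas you insert the phase into $w_\lambda$ so it cancels in the pairing; the paper's choice makes $\|\psi(\lambda^{\varepsilon+\varrho}\cdot)\|_{(N)}\le C\lambda^{(N-n/2)(\varepsilon+\varrho)}$ immediate without your extra frequency-localization step for $\|w_\lambda\|_{(N)}$, but both yield the same bound.
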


We shall use Lemma~\ref{estlem} for $u_{\lambda}$ in~\eqref{udef}, then
${\omega}$ will be real valued and
${\Gamma}$ in~\eqref{estlem3} will be the bicharacteristic
${\Gamma}_j$ converging to a limit bicharacteristic.

\begin{proof}[Proof of Lemma \ref{estlem}]
We shall adapt the proof of~\cite[Lemma
26.4.14]{ho:yellow} to this case. By making the change of variables $y
= {\lambda}^{\varepsilon}x$ we find that 
\begin{equation}\label{utrans}
 \hat u_{\lambda}({\xi}) = {\lambda}^{(n-1){\delta}/2- n{\varepsilon}} \sum_{j=0}^{M}
 {\lambda}^{-j{\kappa}} \int
 e^{i({\lambda}^{\varrho}{\omega}(y) 
 - \w{y,{\xi}/{\lambda}^{\varepsilon}})} {\varphi}_{j}({\lambda}^{{\delta} -
 {\varepsilon}}y)\,dy  
\end{equation}
Let $U$ be a neighborhood of the projection on the second
component of the set in~\eqref{estlem3}. When
${\xi}/{\lambda}^{\varepsilon +{\varrho}} \notin
U$ then for  ${\lambda} \gg 1$ we have that
\begin{multline*}
\bigcup_j \supp {\varphi}_j({\lambda}^{\delta - \varepsilon}\cdot) \ni y\mapsto ({\lambda}^{\varrho}{\omega}(y)  -
\w{y,{\xi}/{\lambda}^{\varepsilon}})/({\lambda}^{{\varrho}} +
 |{\xi}|/{\lambda}^{\varepsilon}) \\ = ({\omega}(y)  -
\w{y,{\xi}/{\lambda}^{\varepsilon +{\varrho}}})/(1 + |{\xi}|/
{\lambda}^{{\varepsilon} +{\varrho}})
\end{multline*}
is in  
a compact set of functions with non-negative imaginary part with a fixed
lower bound on the gradient of the real part. Thus, by integrating by
part in~\eqref{utrans} we find for any positive integer $m$ that 
\begin{equation}\label{pfest}
 |\hat u_{\lambda}({\xi})| \le C_m{\lambda}^{-(n-1)\delta /2 +
   m({\delta} - {\varepsilon})}( {\lambda}^{{\varrho}} +
 |{\xi}|/{\lambda}^{\varepsilon})^{-m} \qquad
 {\xi}/{\lambda}^{\varepsilon +{\varrho}} \notin U \qquad {\lambda} \gg 1
\end{equation}
This gives any negative power of ${\lambda}$ for $m$ large enough
since ${\delta} < {\varepsilon} + {\varrho}$.
If $V$ is bounded and $0 \notin \ol V$ then since $u_{\lambda}$ is
uniformly bounded in $L^2$ we find
\begin{equation*}
 \int_{{\tau}V}  |\hat u_{\lambda}({\xi})|^2 (1  +
 |{\xi}|^2)^{-N}\,d{\xi} \le C_V{\tau}^{-2N} \qquad {\tau} \ge 1
\end{equation*}
Using this estimate with ${\tau} = {\lambda}^{{\varepsilon} + {\varrho}}$
together with the estimate~\eqref{pfest} we obtain~\eqref{estlem1}.  
If ${\chi}
\in C_0^\infty$ then we may apply~\eqref{pfest} to
${\chi}u_{\lambda}$, thus we find for any positive integer $j$ that
\begin{equation*}
  |\widehat {{\chi}u}_{\lambda}({\xi})| \le C_j{\lambda}^{-(n-1)\delta /2 +
   j({\delta} - {\varepsilon})}( {\lambda}^{{\varrho}} +
 |{\xi}|/{\lambda}^{\varepsilon})^{-j} \qquad {\xi}
  \in W \qquad {\lambda} \gg 1
\end{equation*}
if $W$ is any closed cone with $
{\Gamma} \bigcap (\supp {\chi}\times W) = \emptyset$. 
Thus we find that
${\lambda}^m u_{\lambda} \to 0$ in $\Cal D'_{ {\Gamma}}$ for every $m$.
To prove \eqref{estlem2} we assume $x_0 = 0$ and take ${\psi}\in
C_0^\infty$. If $\im {\omega}(0) = 0$ and ${\varphi}(0) \ne
0$ we find
\begin{multline*}
 {\lambda}^{n({\varepsilon} + {\varrho}) - (n-1){\delta}/2} 
 e^{-i{\lambda}^{\varrho}\re {\omega}(0)}\w{u_{\lambda},
   {\psi}({\lambda}^{{\varepsilon} + \varrho}\cdot)}\\ = \int 
 e^{i{\lambda}^{\varrho}({\omega}(x/{\lambda}^{\varrho}) - {\omega}(0))}{\psi}(x) 
 \sum_{j}{\varphi}_j(x/{\lambda}^{{\varepsilon} + \varrho - {\delta}})
 {\lambda}^{-j{\kappa}}\,dx  \\\to \int
 e^{i\w{\re \partial_x{\omega}(0),x}}{\psi}(x)
 {\varphi}_0(0)\,dx \qquad {\lambda} \to + \infty
\end{multline*}
which is not equal to zero for some suitable ${\psi} \in
C^\infty_0$. In fact, we have 
${\varphi}_j(x/{\lambda}^{{\varepsilon} + \varrho - {\delta}}) =
{\varphi}_j(0) + \Cal O({\lambda}^{{\delta} -
  {\varepsilon}-{\varrho}}) \to {\varphi}_j(0)$ when ${\lambda} \to \infty$, because
${\delta} < {\varepsilon} + {\varrho}$. Since
\begin{equation*}
 \mn{{\psi}({\lambda}^{{\varepsilon}+{\varrho}} \cdot)}_{(N)} \le C
 {\lambda}^{(N-n/2)({{\varepsilon}+{\varrho}})} 
\end{equation*}
we obtain that $0 < c \le  {\lambda}^{(N+
\frac{n}{2})({\varepsilon}+{\varrho}) - (n-1){\delta}/2}\mn{u}_{(-N)}$ which  
gives~\eqref{estlem2} and the lemma. 
\end{proof}

\begin{proof}[Proof of Theorem~\ref{mainthm}]
Assume that ${\Gamma}$ is a limit bicharacteristic of $P$.
We are going to show that~\eqref{solvest} does not hold for any
${\nu}$, $N$ and any  
pseudodifferential operator $A$ such that ${\Gamma} \cap \wf (A) =
\emptyset$. This means that there exists
approximate solutions  $0 \ne u_j \in C^\infty_0$ to $P^*u_j \cong 0$
such that  
\begin{equation}\label{solvest0}
 \mn {u_j}_{(-N)}/(\mn{P^*{u_j}}_{({\nu})} + \mn {u_j}_{(-N-n)} +
 \mn{Au_j}_{(0)}) \to \infty  \qquad \text{when $ j \to \infty$}
\end{equation}
which will contradict the local solvability of $P$ at~${\Gamma}$ by
Remark~\ref{solvrem}. 

Let ${\Gamma}_j$ be a sequence of bicharacteristics of $p$\/ that
converges to ${\Gamma} \subset \st$ and $ \lambda_j $ given by~\eqref{c4} and~\eqref{ldef}
with $ \varepsilon $ to be determined later. 
Now the conditions and conclusions are
invariant under symplectic changes of homogeneous coordinates and
multiplication by elliptic pseudodifferential operators. Thus by
Proposition~\ref{prepprop} we may assume
that the coordinates are chosen so that ${\Gamma}_j = I \times
(0,0,{\xi}_j)$ with $|{\xi}_j| = 1$, and for any $0 < {\varepsilon} < 1/3$
and $ c > 0 $
we can write $P^* = Q + R$ where ${\Gamma}_j 
\cap \wf_{\varepsilon} (R) = \emptyset$ and $Q$ has
symbol  
\begin{equation}\label{normform}
{\tau} - r(t,x,{\xi}) + q_0(t,x,{\xi}) + r_0(t,x,{\xi})
\end{equation}
when $ |\xi| \ge c\lambda_j $ and the homogeneous distance to 
${\Gamma}_j$ is less than $c |\xi|^{-\varepsilon} $. We have that $R \in  S^{1+
  {\varepsilon}}_{1-{\varepsilon},{\varepsilon}}$, $r_0 \in
S^{3{\varepsilon} - 1}_{1-{\varepsilon},{\varepsilon}}$, $q_0 \in
S^{\varepsilon}_{1-{\varepsilon},{\varepsilon}}$ is
given by~\eqref{symb}, and $r \in  
S^{1-{\varepsilon}}_{1-{\varepsilon},{\varepsilon}}$ vanishes of
second order at ${\Gamma}_j$.

Now, we may replace the norms
$\mn{u}_{(s)}$ in~\eqref{solvest0} by the norms
\begin{equation*}
 \mn u_s^2 = \mn{\w{D_x}^s u}^2 = \int \w{{\xi}}^{2s}|\hat
 u({\tau},{\xi})|^2\, d{\tau}d{\xi}
\end{equation*}
In fact, the quotient $\w{{\xi}}/\w{({\tau},{\xi})} \cong 1$ 
when $|{\tau}| \ls |{\xi}|$, thus in a conical neighborhood of
${\Gamma}$. So replacing the norms  
in the estimate~\eqref{solvest0} only changes the
constant and the operator $A$ in the estimate~\eqref{solvest}.
By using Proposition~\ref{symplprop} we may assume that
the grazing Lagrangean space $L_j(w) \equiv \set{(s,y;0,0): \ (s,y) \in
\br^{n}}$, $\forall\, w \in {\Gamma}_j$, after conjugation with a 
uniformly bounded $ C^1 $ section $ F(t) $ of Fourier integral operators,
then $\partial_x^2 r = 0$ at ${\Gamma}_j$ and  ${\Gamma}
\cap \wf_{\varepsilon}  (A) = \emptyset $.
Observe that for each ~$ t $ we find that
$ F(t) $ is uniformly continuous in local  $ H_s $ spaces, which we may use in~\eqref{solvest0} after changing~$ A $.
Also the conjugation of $ F(t) $ with the operator with symbol~\eqref{normform}
gives a uniformly bounded expansion. By changing $ A $ again, we may then replace the local $\mn{u}_{s}$
norms by the norms $\mn{u}_{(s)}$ in~\eqref{solvest0} so that we can use Lemma~\ref{estlem}.

Now, by choosing  ${\delta} = 2/5$, ${\varepsilon} = 1/10$ and ${\varrho}
= 1/10$ and using
Propositions~\ref{expprop}, \ref{omegalem}, ~\ref{transprop} and
Remark~\ref{transrem}, we can for each ${\Gamma}_j$ construct   
approximate solution $u_{{\lambda}_j}$ on the form~\eqref{udef} 
so that $Q
u_{{\lambda}_j} = \Cal O({\lambda}^k)$, for any~$k$. The real valued
phase function is equal to
is $\w{x,{\xi}_j} + {\omega}_j(t,x)$ where $ |{\xi}_j| = 1 $ and ${\omega}_j(t,x) \in
S({\lambda_j}^{-7/10}, g_{3/10})$ and the values of 
$(t,x; {\lambda}\partial_t{\omega}_j(t,x),{\lambda}({\xi}_j +
\partial_x{\omega}_j(t,x)))$ have homogeneous distance  
to the rays through  ${\Gamma}_j$ which is $ \ls \lambda^{-2/5} $ when $|x| \ls
{\lambda_j}^{-2/5}$, i.e., in $\supp u_{{\lambda}_j}$. 
Observe that if $ \lambda \gg 1 $ then we have that $ |\xi_0 + \partial_x\omega(t,x)| \ge c $ 
in  $\supp u_{{\lambda}_j}$ for some $ c > 0  $. We find that
 $$
{\omega}_j(t,x) = {\lambda_j}^{-7/10} \wt
{\omega}_j{\lambda_j}^{3/10}t,{\lambda_j}^{3/10}x)
 $$ 
where $\wt {\omega}_j
\in C^\infty$ uniformly so $\partial_x {\omega} = \Cal
O({\lambda}_j^{-2/5})$ when $x  = \Cal O({\lambda}_j^{-2/5})$ and
$$
{\lambda_j}(\w{x,{\xi}_j} + {\omega}_j(t,x)) =
{\lambda_j}^{7/10}\w{{\lambda_j}^{3/10}x,{\xi}_j} + {\lambda}^{-4/10}\wt
{\omega}_j({\lambda_j}^{3/10}t,{{\lambda_j}^{3/10}x})
$$ 
Thus ${\delta}= 2/5$, 
${\varepsilon} = 3/10$,  ${\kappa} = 1/10$ and ${\varrho} = 7/10$
in~\eqref{estlem0} so we find ${\varepsilon} + {\varrho} = 
1 > {\delta} > \varepsilon$.

The amplitude functions ${\varphi}_{k,j}(t,x) =
{\phi}_{k,j}({\lambda_j}^{2/5}t, {\lambda_j}^{2/5}x)$ where
${\phi}_{k,j} \in 
C^\infty_0$ uniformly in~$j$ with fixed compact support in~$x$, but
in~$t$ the support is bounded by
$C{\lambda_j}^{2/5}$, so $u_{{\lambda}_j}$
will satisfy the conditions in Lemma~\ref{estlem} uniformly.
Clearly differentiation of~$Qu_{\lambda_j}$ can at most give a factor
${\lambda}_j$ since ${\varepsilon} + {\varrho} = 1$ and
${\delta} < 1$. Because of the bound on the
support of $u_{\lambda_j}$ we may obtain that 
\begin{equation} \label{8.13}
\mn{Q u_{\lambda_j}}_{({\nu})} = \Cal
O({\lambda}_j^{-N-n})
\end{equation} 
for the chosen ${\nu}$. 

If $\wf (A) \bigcap {\Gamma} = \emptyset$,
then we find $\wf (A) \bigcap {\Gamma}_j = \emptyset$ for large $j$,
so Lemma~\ref{estlem} gives $\mn{Au_{{\lambda}_j}}_{(0)} 
= \Cal O({\lambda}_j^{-N-n})$ when $ j \to \infty $.
Since $x = \Cal O({\lambda}_j^{-2/5})$ in $\supp u_{{\lambda}_j}$,
we find that the values of $(t,x; {\lambda}\partial_t{\omega}_j(t,x), {\lambda}({\xi}_j +
\partial_x{\omega}_j(t,x)))$ have homogeneous distance to the rays through ${\Gamma}_j$ 
which is $ \ls  \lambda^{-2/5}$ for $x \in \supp u_{\lambda_j}$,
and this converges to ${\Gamma}$.
Thus, if $R \in S^{11/10}_{9/10,1/10}$ such that
$\wf_{1/10}(R) \bigcup {\Gamma}_j = \emptyset$ then we find from the
expansion~\eqref{trevesexp}  
that all the terms of~$Ru_{{\lambda}_j}$ vanish for large enough
~${\lambda}_j$. In fact,  
since ${\lambda}_j^{-2/5} \ll {\lambda}_j^{-1/10}$ for $j \gg 1$, we find 
for any~${\alpha} $ and~$K$ that
$$
\partial^{\alpha} R(t,x;{\lambda}_j((0,\xi_j) + \partial_{t,x} {\omega}_j(t,x))) =
O({\lambda}_j^{-K})
$$ 
in $\bigcup_k \supp {\varphi}_{k,j}$. As before, we find that
$\mn{Ru_{{\lambda}_j}}_{(\nu)} = \Cal O({\lambda}_j^{-N-n})$
by the bound on the support of~$u_{\lambda}$, so we obtain
from~\eqref{8.13} that  
\begin{equation} 
\mn{P^*u_{\lambda_j}}_{({\nu})} = \Cal
O({\lambda}_j^{-N-n})
\end{equation} 
for the chosen ${\nu}$. 

Since  ${\varepsilon} + {\varrho} =
1$ we also find from  Lemma~\ref{estlem} that
\begin{equation*}
 {\lambda}_j^{-N} = {\lambda}_j^{-N({\varepsilon}+ {\varrho})}\gs 
\mn{u_{{\lambda}_j}}_{(-N)}  \gs 
 {\lambda}_{j} ^{-(N+
    \frac{n}{2})({\varepsilon}+{\varrho}) + (n-1){\delta}/2}
 =  {\lambda}_{j} ^{-N - n/2 + (n-1)/5}   \ge {\lambda}_j^{-N-n/2}
\end{equation*}
when ${\lambda}_j \ge 1$. We obtain that~\eqref{solvest0} holds for
$u_j = u_{{\lambda}_j}$  when $j \to \infty$, 
so Remark~\ref{solvrem} gives that~$P$ is not solvable at the limit
bicharacteristic~${\Gamma}$. 
\end{proof}

\bibliographystyle{plain}

\bibliography{nec}

\begin{thebibliography}{1}

\bibitem{de:nt}
Dencker, N.,
\newblock The resolution of the {N}irenberg-{T}reves conjecture,
\newblock {\em Ann. of Math. (2)}, 163(2), 405--444, 2006.


\bibitem{ho:cauchy} 
H{\"o}rmander, L., {The {C}auchy problem for differential
  equations with double characteristics},
 {\em J. Analyse Math.}, {32}, {118--196}, {1977}.

\bibitem{ho:nec}
H{\"o}rmander, L.,
\newblock Pseudodifferential operators of principal type,
\newblock in {\em Singularities in boundary value problems ({M}aratea,
  1980)}, {\em NATO Adv. Study  
  Inst. Ser. C: Math. Phys. Sci. 65}, 69--96. Reidel, Dordrecht, 1981.

\bibitem{ho:yellow}
\bysame,
\newblock {\em The Analysis of Linear Partial Differential Operators, Vol.
  I--IV}, 
\newblock Springer Verlag, Berlin, Heidelberg, New York, Tokyo, 1983--1985.

\bibitem{MU1}
Mendoza,  G. A., and G.~A. Uhlmann,
\newblock A necessary condition for local solvability for a class of operators
  with double characteristics,
\newblock {\em J. Funct. Anal.}, 52(2), 252--256, 1983.

\bibitem{Men}
Mendoza, G. A.,
\newblock A necessary condition for solvability for a class of operators with
  involutive double characteristics,
\newblock in {\em Microlocal analysis ({B}oulder, {C}olo., 1983)},
  {\em Contemp. Math. 27}, 193--197, Amer. Math. Soc., Providence, RI, 1984.

\bibitem{T2}
Treves, F.,
\newblock {\em Introduction to pseudodifferential and {F}ourier integral
  operators, {V}ol. 2},
\newblock , The University Series in Mathematics, Plenum Press, New York-London, 1980,


\end{thebibliography}

\end{document}